\documentclass[12pt]{amsart}

\usepackage{amsthm,amssymb}
\usepackage{enumitem}

\usepackage{xcolor}

\usepackage{soul}

\usepackage{orcidlink}

\newtheoremstyle{claim}
     {11pt}
     {11pt}
     {}
     {}
     {\itshape}
     {}
     {.5em}
     {\noindent\thmname{#1} \thmnumber{#2}.{\rm\thmnote{#3}}}

\theoremstyle{claim}

\newtheorem{claim}{Claim}

\newtheoremstyle{theorem}
     {11pt}
     {11pt}
     {}
     {}
     {\bfseries}
     {}
     {.5em}
     {\noindent\thmnumber{#2}. \thmname{#1} \thmnote{(#3)}}

\theoremstyle{theorem}

\newtheorem{defi}{Definition}
\newtheorem{thm}[defi]{Theorem}
\newtheorem{coro}[defi]{Corollary}
\newtheorem{lemma}[defi]{Lemma}
\newtheorem{propo}[defi]{Proposition}
\newtheorem{ques}[defi]{Question}
\newtheorem{obs}[defi]{Observation}

\newcommand{\cl}[2]{\mathrm{cl}_{#1}\!\left(#2\right)}

\newcommand{\supp}{\mathsf{supp}}
\newcommand{\ZFC}{\textsf{ZFC}}
\newcommand{\PFA}{\textsf{PFA}}

\title[FC and ES of product spaces and subspaces]{Functional countability and exponential separability of product spaces and subspaces}

\author[R. Hernández-Gutiérrez]{Rodrigo Hernández-Gutiérrez\orcidlink{0000-0002-5949-0871}}
\address[R. Hernández-Gutiérrez]{Departamento de Matemáticas, Universidad Autónoma Metropolitana campus Iztapalapa, Av. San Rafael Atlixco 186, Leyes de Reforma 1a Sección, Iztapalapa, 09310, Mexico city, Mexico}
\email[R. Hernández-Gutiérrez]{rod@xanum.uam.mx}

\author[S. Spadaro]{Santi Spadaro\orcidlink{0000-0001-5880-8799}}
\address[S. Spadaro]{Dipartimento di Matematica e Informatica \\ Universit\` a degli Studi di Catania, viale Andrea Doria, n. 6, 95125, Catania, Italy}
\email[S. Spadaro]{santi.spadaro@unict.it}

\makeatletter
\@namedef{subjclassname@2020}{\textup{2020} Mathematics Subject Classification}
\makeatother
\subjclass{54C30, 54B10, 54G12, 54D55, 54F05.}
\keywords{functionally countable, exponentially separable, ordinals, $\sigma$-product, hereditarily Lindelöf, Ostaszewski space}

\date{\today}

\begin{document}

\begin{abstract}
We investigate the behavior of functional countability and exponential separability in products and subspaces of topological spaces. We solve a problem of Tkachuk by showing that the product of functionally countable pseudocompact spaces is itself functionally countable. Solving another problem of Tkachuk, we show that it is independent of \textsf{ZFC} whether regular spaces which have all their subspaces functionally countable are hereditarily Lindelöf. Finally, we prove that the $\sigma$-product of non-zero ordinals is exponentially separable, thereby extending a result of Kemoto and Szeptycki. 
\end{abstract}

\maketitle

\section{Introduction}

A space $X$ is \emph{functionally countable} if every continuous real-valued function defined on $X$ has countable image. This notion was first identified (but not named) by Rudin (\cite{rudin-57}) and by Pełczyński and Semadeni (\cite{pel-semadeni}), who independently proved that a compact space is functionally countable if and only if it is scattered. Notable classes of spaces that are functionally countable include Lindelöf $P$-spaces (attributed to A.W. Hager in \cite[Proposition 3.2]{levy-rice}), Lindelöf scattered spaces (\cite[Theorem II.7.13]{arkh-top_funct_spaces}), $\sigma$-products of the discrete space $\{0,1\}$ (mentioned in \cite{levy-matveev-2010}), and scattered suborderable spaces of countable extent (\cite[Corollary 3.6]{tka_wil-fc_GO})

In \cite{galvin-hernandez}, S. Hernández solved a question of Galvin by proving that there exist two functionally countable spaces whose product is not functionally countable. On the other hand, Tkachuk has found conditions under which the product of functionally countable spaces is functionally countable, see \cite{tka-fc_prod-2022}.

In section 2 of this paper we answer a question of Tkachuk's by proving that the finite product of pseudocompact functionally countable spaces is functionally countable (Theorem \ref{thm:product_pseudocompact_FC} and Corollary \ref{coro:finite_product_pseudocompact_FC}).

In \cite{tka-nice_subclass}, Tkachuk introduced the notion of exponentially separable space.

\begin{defi}
Let $X$ be a topological space. Given $\mathcal{F}\subset\wp(X)$, a set $A\subset X$ is \emph{strongly dense} in $\mathcal{F}$ if $A\cap\left(\bigcap\mathcal{G}\right)\neq\emptyset$ whenever $\mathcal{G}\subset\mathcal{F}$ and $\bigcap \mathcal{G}\neq\emptyset$. $X$ is \emph{exponentially separable} if, for every countable family $\mathcal{F}$ of closed sets, there is countable subset $A$ of $X$ that is strongly dense in $\mathcal{F}$.
\end{defi}

Every exponentially separable space is functionally countable, and in fact has all its closed subspaces functionally countable as well \cite[Corollary 3.7]{tka-nice_subclass}. Moreover, functionally countable spaces can be characterized by replacing \emph{closed} with \emph{zero-set} in the definition of exponentially separable, a result that shows how natural this definition is in the investigation of functionally countable spaces.

\begin{thm}
\cite[Theorem 3.8]{tka-nice_subclass} A space $X$ is functionally countable if and only if, for every countable family $\mathcal{F}$ of zero-sets, there is a countable subset $A$ of $X$ that is strongly dense in $\mathcal{F}$.
\end{thm}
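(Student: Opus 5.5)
Both implications go through the diagonal map of countably many functions into a product of real lines. The converse direction also needs a "one function at a time" reduction.

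For the direction ($\Leftarrow$), let $f\colon X\to\mathbb{R}$ be continuous, and let $\mathcal{B}$ be a countable base of open intervals of $\mathbb{R}$. I take the countable family of zero-sets
\[
\mathcal{F}=\{f^{-1}(\mathrm{cl}(I)): I\in\mathcal{B}\}\cup\{X\setminus f^{-1}(I): I\in\mathcal{B}\}.
\]
Each member is a zero-set, because closed subsets of $\mathbb{R}$ are zero-sets. Fix a countable $A$ strongly dense in $\mathcal{F}$, and let $r\in f[X]$. Then
\[
f^{-1}(r)=\bigcap\{f^{-1}(\mathrm{cl}(I)): r\in I\in\mathcal{B}\}\cap\bigcap\{X\setminus f^{-1}(I): r\notin \mathrm{cl}(I)\}
\]
is a nonempty intersection of a subfamily of $\mathcal{F}$. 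Hence it meets $A$, so $r\in f[A]$ and $f[X]=f[A]$ is countable.

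For the direction ($\Rightarrow$), let $\mathcal{F}=\{Z_n:n\in\omega\}$ with $Z_n=f_n^{-1}(0)$. Consider the diagonal $g=\Delta_n f_n\colon X\to\mathbb{R}^\omega$. The key step is to show that $g[X]$ is countable. Granting this, for each $y\in g[X]$ I pick $a_y\in g^{-1}(y)$ and let $A=\{a_y:y\in g[X]\}$. If $x\in\bigcap\mathcal{G}$, then $a_{g(x)}$ has the same values of all the $f_n$ as $x$, so $a_{g(x)}\in\bigcap\mathcal{G}$. Thus $A$ is countable and strongly dense in $\mathcal{F}$.

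To show $g[X]$ is countable, I first replace each $f_n$ by $\arctan\circ f_n$, which keeps the zero-sets unchanged, so I may assume each $f_n$ maps into $[-1,1]$. Next I use the Cantor-set encoding trick. Let $\varphi_n\colon[-1,1]\to\mathbb{R}$ be a continuous map that is injective on the countable set $f_n[X]$; for instance take $\varphi_n$ to be the identity. Then the single function
\[
h=\sum_n 3^{-n}\,\phi_n\circ f_n
\]
needs to recover $g$, and plain real sums do not separate points. The standard fix is to compose with maps into the Cantor set. Since $f_n[X]$ is countable, I choose a continuous $\psi_n\colon\mathbb{R}\to\{0,1\}^\omega$-like coding that is injective on $f_n[X]$. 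Such a map cannot go into the Cantor set continuously on all of $\mathbb{R}$, so instead I argue directly, as follows.

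Since $g[X]\subseteq\prod_n f_n[X]$, it is a subspace of a countable product of countable metrizable spaces. Such a subspace is zero-dimensional, separable and metrizable, so it embeds in the Cantor set $2^\omega\subseteq\mathbb{R}$ via some $e$. Then $e\circ g$ is continuous and real-valued, so its image is countable by functional countability. Since $e$ is injective, $g[X]$ is countable.

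The main obstacle is exactly this last point: passing from countably many countable-image functions to the countable image of the diagonal. The embedding $e$ is what handles it. One has to check that a countable product of countable metrizable spaces is zero-dimensional, which holds because each countable metrizable space is zero-dimensional and zero-dimensionality is preserved by countable products.
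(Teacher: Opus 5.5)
Your proof is correct. The paper gives no proof of this statement (it is quoted from Tkachuk), so there is nothing in the paper to compare it with. What you wrote is the standard argument. For ($\Leftarrow$) you pull back closed rational intervals by $f$. For ($\Rightarrow$) you show that the diagonal map $g=\Delta_n f_n$ has countable image, by embedding the zero-dimensional second countable space $g[X]\subseteq\prod_n f_n[X]$ into the Cantor set. You then pick one point in each fiber of $g$. That suffices because each $Z_n=g^{-1}(\{y\colon y(n)=0\})$ is a union of fibers, so any nonempty intersection of members of $\mathcal{F}$ contains a whole fiber.

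In a write-up, delete the abandoned paragraph about $h=\sum_n 3^{-n}\varphi_n\circ f_n$ and the $\arctan$ normalization, since neither is used. The obstruction you worried about there is not real anyway. A coding $\psi_n$ only has to be defined on the countable space $f_n[X]$, and $\Delta_n(\psi_n\circ f_n)\colon X\to(2^\omega)^\omega\cong 2^\omega$ is just your final argument done coordinatewise.

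The one fact you state without proof is that a countable metric space has a clopen base. It follows by taking balls around a point whose radii avoid the countably many distances from that point.
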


From the above theorem it immediately follows (\cite[Corollary 3.9]{tka-nice_subclass}) that a perfectly normal functionally countable space is exponentially separable. Moreover, we essentially know of only two examples of spaces that are functionally countable but not exponentially separable: \cite[Example 3.15]{tka-nice_subclass} and \cite[Theorem 12]{HG_S-2025a}, both of which are not normal. Thus, the following question seems natural.

\begin{ques}\label{ques:normality}
Let $X$ be a normal and functionally countable space. Does it follow that $X$ is exponentially separable?
\end{ques}

In search for an example of a normal functionally countable space that is not exponentially separable, we looked at $\sigma$-products of ordinals. These spaces were shown to be always functionally countable and sometimes normal by Kemoto and Szeptycki in \cite{kem-sz-2004}. Our main result from Section 3 is that these spaces are exponentially separable (Theorem \ref{thm:sigma-prod}). Question \ref{ques:normality} remains open.

Finally, in Section 4 we answer a question of Tkachuk's \cite[Question 4.3]{tka-fc_nice_prod-2025} by showing that it is independent of \ZFC{} whether every regular space all of whose subspaces are functionally countable is hereditarily Lindelöf.

All spaces are assumed to be Hausdorff. Our notation is standard and follows \cite{En} for topological notions and \cite{jech} for set-theoretic notions. A space $X$ is \emph{scattered} if every non-empty subspace of $X$ has an isolated point. Given a set $A$ and a cardinal $\lambda$, the collections $[A]\sp{\lambda}$, $[A]\sp{<\lambda}$ and $[A]\sp{\leq\lambda}$ consist of all subsets of $A$ with cardinality either exactly $\lambda$, less than $\lambda$, or at most $\lambda$, respectively.

\section{Products of pseudocompact spaces}

Recall that a space $X$ is \emph{pseudocompact} if it is Tychonoff and every continuous, real-valued function defined on $X$ is bounded. In \cite[Proposition 3.14]{tka-fc_prod-2022}, Tkachuk proved that the finite product of countably compact and functionally countable spaces is also functionally countable, and asked (\cite[Question 4.7]{tka-fc_prod-2022}) whether countable compactness could be replaced by pseudocompactness in that result. In this section we answer this question on the affirmative, see Theorem \ref{thm:product_pseudocompact_FC} and Corollary \ref{coro:finite_product_pseudocompact_FC} below.

The following two results were proved by Tkachuk.

\begin{thm}\cite[Theorem 3.10]{tka-fc_prod-2022}\label{tka:char_pseudocompact_products}
 Let $X$ and $Y$ be functionally countable spaces. If $X\times Y$ is pseudocompact, then $X\times Y$ is functionally countable.
\end{thm}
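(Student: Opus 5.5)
The plan is to reduce everything to the two factors, using the fact that in a pseudocompact product a continuous function depends on the first coordinate ``uniformly in the second''. Fix a continuous $f\colon X\times Y\to\mathbb{R}$. For $x\in X$ write $f_x\colon Y\to\mathbb{R}$, $f_x(y)=f(x,y)$. Since $X\times Y$ is pseudocompact, $f$ is bounded, so each $f_x$ lies in the Banach space $C_u(Y)$ of bounded continuous functions with the sup norm. Each $f_x$ is continuous on the functionally countable space $Y$, so $f_x(Y)$ is countable. Since $f(X\times Y)=\bigcup_{x\in X}f_x(Y)$, it suffices to show that the set $\{f_x : x\in X\}$ is countable.

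\emph{Step 1 (main obstacle).} I will show that $\Phi\colon X\to C_u(Y)$, $\Phi(x)=f_x$, is continuous whenever $X\times Y$ is pseudocompact. This is a Glicksberg-type fact. Suppose instead that $\Phi$ is discontinuous at $x_0$ with witness $\varepsilon>0$. I would then build by induction open sets $U_n\ni x_0$, points $x_n\in U_n$, $y_n\in Y$, and basic open rectangles $W_n=A_n\times B_n\ni(x_n,y_n)$ with $\overline{A_n}\subset U_n$. These should satisfy three conditions: $|f(x_n,y_n)-f(x_0,y_n)|>\varepsilon$; $f$ oscillates by less than $\varepsilon/8$ on $W_n$ and on $U_{n+1}\times B_k$ near $(x_0,y_k)$ for $k\le n$; and $U_{n+1}$ is small enough that $|f(x,y_k)-f(x_0,y_k)|<\varepsilon/8$ for $x\in U_{n+1}$ and $k\le n$. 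I then aim to show that $\{W_n\}$ (or an infinite subfamily) is locally finite. That contradicts pseudocompactness, because in a Tychonoff pseudocompact space every locally finite family of nonempty open sets is finite.

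The delicate point is ruling out a cluster point $(p,q)$. The natural attempt is to compare $f$ at $(p,q)$ and at $(x_0,q)$ using the smallness conditions along the tails. If the direct bookkeeping is awkward, my fallback is to cite the known equivalence: $X\times Y$ is pseudocompact iff, for every continuous $f$, the map $x\mapsto f_x$ into $C_u(Y)$ is continuous (and $X$ is pseudocompact).

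\emph{Step 2.} $X$ is pseudocompact, being a continuous image of $X\times Y$ under the projection. Hence $M=\Phi(X)$ is a pseudocompact metrizable space, so $M$ is compact metrizable. Moreover $M$ is functionally countable: if $g\colon M\to\mathbb{R}$ is continuous, then $g\circ\Phi$ is continuous on $X$, so $g(M)=g\circ\Phi(X)$ is countable.

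\emph{Step 3.} A compact metrizable functionally countable space is countable. To see this, embed $M$ into $[0,1]^\omega$. Each coordinate projection restricted to $M$ has countable image, so $M\subset\prod_n C_n$ with each $C_n$ countable. Hence $M$ is a compact metric space in which no subspace is a copy of the Cantor set, since a Cantor set would map continuously onto $[0,1]$, contradicting functional countability. A compact metric space without a Cantor set is countable by the Cantor--Bendixson theorem, so $M$ is countable. Alternatively, $M$ is scattered by the Rudin / Pełczyński--Semadeni theorem quoted in the introduction, and a scattered compact metric space is countable. Thus $\{f_x:x\in X\}$ is countable, and so is $f(X\times Y)$.
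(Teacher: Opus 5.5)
Your proof is correct. The paper gives no proof of this statement; it quotes it from Tkachuk and uses it as a black box. So your argument is a self-contained substitute, not a variant of anything in the text.

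\emph{Step~1.} Your hedge is unnecessary, because the conditions you list already close the argument. It suffices to have the following:
decreasing open sets $U_n\ni x_0$;
points $x_n\in A_n\subseteq U_n$;
$|f-f(x_n,y_n)|<\varepsilon/8$ on $A_n\times B_n$;
$|f-f(x_0,y_n)|<\varepsilon/8$ on $U_{n+1}\times B_n$, which you get from continuity of $f$ at $(x_0,y_n)$ after shrinking $B_n$;
and $|f(x_n,y_n)-f(x_0,y_n)|>\varepsilon$.

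Suppose every neighbourhood of $(p,q)$ meets infinitely many $A_n\times B_n$. Then $p\in\overline{U_{n+1}}$ for every $n$, so $|f(p,y)-f(x_0,y_n)|\le\varepsilon/8$ for all $y\in B_n$. Choose a rectangle $G\times H\ni(p,q)$ on which $|f-f(p,q)|<\varepsilon/8$, and a point $(x,y)\in(G\times H)\cap(A_n\times B_n)$. Going through $(x,y)$ gives $|f(x_n,y_n)-f(p,q)|<\varepsilon/4$. Going through $(p,y)$ gives $|f(p,q)-f(x_0,y_n)|<\varepsilon/4$. Together these contradict $|f(x_n,y_n)-f(x_0,y_n)|>\varepsilon$.

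Your fallback equivalence is also true, and only its forward direction, which is exactly Step~1, is needed.

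\emph{Step~3.} The embedding into $[0,1]^\omega$ plays no role. To get a contradiction from a Cantor set, you must first extend the map onto $[0,1]$ from the Cantor set to all of $M$ by Tietze. The scattered route avoids this.

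\emph{Comparison.} The quickest route from classical results goes through Glicksberg's theorem: pseudocompactness of $X\times Y$ gives $\beta(X\times Y)=\beta X\times\beta Y$.
Since $X$ is pseudocompact, for any $g\in C(\beta X)$ the set $g(X)$ is compact. Hence $g(\beta X)=g(X)$ is countable.
So $\beta X$, and likewise $\beta Y$, is compact and functionally countable, hence scattered.
Then $\beta X\times\beta Y$ is compact scattered, hence functionally countable.
Every $f\in C(X\times Y)$ is bounded, so it extends to $\beta X\times\beta Y$, and its image is countable.

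That argument is very short, but it hides the real work inside Glicksberg's theorem and uses Stone--\v{C}ech machinery. Your version is elementary and self-contained. It also shows clearly how each hypothesis enters: functional countability of $X$ is used only through the compact metrizable image $\Phi(X)$, and that of $Y$ only through the individual sections $f_x$.
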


\begin{lemma}\cite[Proposition 3.13]{tka-nice_subclass}\label{lemma:pscomp-scattered}
 Every pseudocompact, functionally countable space is scattered.
\end{lemma}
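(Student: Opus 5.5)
We prove Lemma \ref{lemma:pscomp-scattered} by contradiction: assuming $X$ is pseudocompact and not scattered, we construct a continuous map of $X$ onto $[0,1]$, which has uncountable image. Suppose some nonempty $Y\subseteq X$ has no isolated points. Its closure $\overline{Y}$ is then also crowded, so we may assume $Y$ is closed. The difficulty is that $Y$ need not be pseudocompact, so we cannot simply cite the compact case of Rudin and Pełczyński--Semadeni. Instead we build a Cantor scheme of open subsets of $X$ directly.

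Index the construction by finite binary sequences $s\in 2^{<\omega}$. For each $s$ we choose a nonempty open set $U_s\subseteq X$ with $U_s\cap Y\neq\emptyset$ and a continuous $f_s\colon X\to[0,1]$. The requirements are:
\begin{itemize}
\item $\overline{U_{s\frown i}}\subseteq U_s$;
\item $\overline{U_{s\frown 0}}\cap\overline{U_{s\frown 1}}=\emptyset$;
\item $f_s$ equals $1$ on $\overline{U_s}$ and vanishes outside $U_{s|(|s|-1)}$.
\end{itemize}
This is possible because $U_s\cap Y$ is an infinite Hausdorff set: pick two distinct points of it, separate them by disjoint open sets, and shrink these using complete regularity (functionally closed neighbourhoods). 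Both points lie in $Y$, so the new sets still meet $Y$.

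For a branch $b\in 2^\omega$, the sets $U_{b|n}$ form a decreasing sequence of nonempty open sets with $\overline{U_{b|n+1}}\subseteq U_{b|n}$. Pseudocompactness says exactly that $\bigcap_n\overline{U_{b|n}}\neq\emptyset$. This is the standard characterization for Tychonoff spaces, and it is the one point where pseudocompactness enters. Distinct branches give disjoint intersections, so we obtain $2^{\omega}$ pairwise disjoint nonempty closed sets $K_b$.

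It remains to turn this into a real-valued function, and this is the main obstacle: we need a single continuous function separating the $K_b$. We define
\[
g=\sum_{n}\ \sum_{|s|=n,\ s(n-1)=1} 3^{-n}\cdot 2\,f_s .
\]
For fixed $n$ the $f_s$ with $|s|=n$ have pairwise disjoint supports. Since $\overline{U_{s\frown0}}\cap\overline{U_{s\frown1}}=\emptyset$, we can choose each $f_{s\frown i}$ to vanish outside a neighbourhood $V_{s\frown i}\subseteq U_s$ of $\overline{U_{s\frown i}}$, with $V_{s\frown0}\cap V_{s\frown1}=\emptyset$. Local finiteness may fail, which would threaten continuity of the inner sum. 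We avoid this by taking the inner sums recursively: each level's function is $\max$/sum of two functions with disjoint closed supports times the previous level's. Concretely, set
\[
h_{s\frown i}=f_{s\frown i}\cdot h_s ,
\]
so that everything at level $n$ is a finite combination of continuous functions, because there are only $2^n$ nodes. Then $g$ is a uniformly convergent series of continuous functions, hence continuous. On $K_b$ the function $g$ takes the value $\sum_n 2b(n)3^{-n}$, a Cantor-set coordinate. Therefore $g[X]$ contains the Cantor set and is uncountable, contradicting functional countability.
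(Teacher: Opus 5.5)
Your argument works, but it takes a different route from the one behind this lemma. The paper gives no proof; it cites Tkachuk. The usual argument goes \emph{up} to $\beta X$ instead of \emph{down} to a crowded subspace. A continuous real-valued image of a pseudocompact space is a pseudocompact subset of $\mathbb{R}$, hence compact. So for every continuous $F\colon\beta X\to\mathbb{R}$ we get $F[\beta X]\subseteq\cl{\mathbb{R}}{F[X]}=F[X]$, which is countable. Thus $\beta X$ is a compact functionally countable space, hence scattered by the Rudin--Pe{\l}czy\'nski--Semadeni theorem, and so is its subspace $X$. This avoids exactly the obstacle you identified (a crowded $Y$ need not be pseudocompact) and uses the compact case as a black box, in three lines. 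Your Cantor-scheme argument instead reproves the compact case inside $X$, using pseudocompactness only to make each $K_b=\bigcap_n\overline{U_{b|n}}$ nonempty. What it buys is a self-contained proof with an explicit $g$ whose image contains the Cantor set.

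Some details need tightening. The first support condition you list ($f_s$ vanishing outside $U_{s|(|s|-1)}$) is too weak, because it lets the sibling of $b|n$ be nonzero on $K_b$. The stronger condition with $V_{s\frown0}\cap V_{s\frown1}=\emptyset$ is the right one. However, it does not follow from $\overline{U_{s\frown0}}\cap\overline{U_{s\frown1}}=\emptyset$ alone: separating disjoint closed sets by open sets or by functions needs normality. Build it into the construction instead:
\begin{itemize}
\item take disjoint open $W_0,W_1\subseteq U_s$ containing $y_0,y_1$;
\item take continuous $\phi_i\colon X\to[0,1]$ with $\phi_i(y_i)=1$ and $\phi_i=0$ off $W_i$;
\item set $U_{s\frown i}=\phi_i^{-1}[(2/3,1]]$, $f_{s\frown i}=\min\{1,\max\{0,3\phi_i-1\}\}$ and $V_{s\frown i}=W_i$.
\end{itemize}
Also, the worry about local finiteness and the products $h_s$ are unnecessary. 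Level $n$ has only finitely many nodes, so each inner sum is a finite sum of continuous functions with pairwise disjoint cozero sets, bounded by $2\cdot 3^{-n}$. The Weierstrass test then gives continuity of $g$. Finally, with your indexing the value of $g$ on $K_b$ is $\sum_{n\geq 1}2b(n-1)3^{-n}$.
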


Next, we recall an easy characterization of pseudocompactness for spaces with a dense set of isolated points. For every space $X$, the set of isolated points of $X$ will be denoted by $I(X)$.

\begin{lemma}\cite[Lemma 8.8.5]{hh-hrusak-2018}\label{lemma:pseudocompact_char}
 Let $X$ be a space such that $\cl{X}{I(X)}= X$. The following are equivalent.
 \begin{enumerate}[label=(\roman*)]
  \item $X$ is pseudocompact;
  \item for each $A\in[I(X)]^\omega$ it follows that $\cl{X}{A}\setminus A\neq\emptyset$.
 \end{enumerate}
\end{lemma}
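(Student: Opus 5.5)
We need to prove: for a space $X$ with $I(X)$ dense, $X$ is pseudocompact if and only if every $A\in[I(X)]^\omega$ satisfies $\cl{X}{A}\setminus A\neq\emptyset$. Following the paper's definition, pseudocompact includes Tychonoff, so we take $X$ Tychonoff throughout. The plan is to prove each direction by contraposition, using continuous functions built on countable discrete sets of isolated points.

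For (i)$\Rightarrow$(ii), suppose some countably infinite $A=\{a_n:n\in\omega\}\subset I(X)$ is closed in $X$. We then build an unbounded continuous function. Each $a_n$ is isolated, so $\{a_n\}$ is clopen. Define $f(a_n)=n$ and $f(x)=0$ for $x\notin A$. To check continuity:
\begin{itemize}
\item on $X\setminus A$, which is open because $A$ is closed, $f$ is constantly $0$;
\item on each open singleton $\{a_n\}$, $f$ is trivially continuous.
\end{itemize}
These open sets cover $X$, so $f$ is continuous. Since $f$ is unbounded, $X$ is not pseudocompact. Density of $I(X)$ is not needed in this direction.

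For (ii)$\Rightarrow$(i), suppose $f\colon X\to\mathbb{R}$ is continuous and unbounded. We find points $a_n\in I(X)$ with $|f(a_n)|>|f(a_{n-1})|+1$ and $|f(a_n)|>n$. This is possible because $f^{-1}\bigl((-\infty,-r)\cup(r,\infty)\bigr)$ is a nonempty open set for every $r$, and density of $I(X)$ gives an isolated point inside it. This step is where the hypothesis $\cl{X}{I(X)}=X$ is used.

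Set $A=\{a_n:n\in\omega\}$; it is countably infinite. We claim $A$ is closed, contradicting (ii). Take $x\in X$. By continuity, $U=f^{-1}\bigl((f(x)-\tfrac12,f(x)+\tfrac12)\bigr)$ is an open neighborhood of $x$. The values $|f(a_n)|$ are pairwise more than $1$ apart, so $U$ contains at most one point of $A$. Hence $x$ is not an accumulation point of $A$, so $\cl{X}{A}=A$.

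The only point requiring care is this choice of a sequence with values spreading apart, which makes $A$ discrete and closed. Once that is set up, both directions are routine.
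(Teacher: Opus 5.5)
Your proof is correct. The paper gives no argument of its own for this lemma; it quotes it from Hern\'andez-Hern\'andez and Hru\v{s}\'ak. Your write-up is therefore a self-contained proof, and it is the standard direct one.

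In (i)$\Rightarrow$(ii), the function sending $a_n\mapsto n$ and everything else to $0$ is continuous exactly because $A$ is closed and consists of isolated points. In (ii)$\Rightarrow$(i), density of $I(X)$ lets you witness the unboundedness of $f$ on isolated points whose values are more than $1$ apart, and that makes $A$ closed and discrete.

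A common alternative is to quote the general fact that a Tychonoff space is pseudocompact if and only if every locally finite family of nonempty open sets is finite. One then applies it to $\{\{a\}\colon a\in A\}$. Your argument proves by hand exactly the instance of that fact needed here, so nothing is lost by avoiding the general result.

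There are three minor points.
\begin{enumerate}
\item When you pass from ``$U$ meets $A$ in at most one point'' to $\cl{X}{A}=A$, consider the case $x\notin A$ with $U\cap A=\{a_k\}$. There you need $U\setminus\{a_k\}$ to be open, i.e.\ that $X$ is $T_1$. This is automatic here, but it is worth saying.
\item Your standing Tychonoff assumption is necessary, not just convenient. The paper's notion of pseudocompactness includes the Tychonoff axiom, while (ii) imposes no separation beyond what $X$ already has. So (ii)$\Rightarrow$(i) can fail for merely Hausdorff spaces with dense isolated points. The lemma should be read for Tychonoff $X$, which is how it is used for $X\times Y$ in Theorem~\ref{thm:product_pseudocompact_FC}.
\item The requirement $|f(a_n)|>n$ is redundant given the gaps of size greater than $1$, but it is harmless.
\end{enumerate}
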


Given a space $X$, $A\in[X]^\omega$ and $p\in X$, $A\rightarrow p$ denotes that $A$ converges to $p$, that is, for every open set $U\subset X$ such that $p\in U$ the set $A\setminus U$ is finite. Recall that a space $X$ is \emph{sequential} if for every non-closed $A\subset X$ there exists $B\in[A]^\omega$ and $p\in X\setminus A$ such that $B\rightarrow p$. In Proposition \ref{propo:existence_many_conv_seq} below, we will prove that functionally countable pseudocompact spaces satisfy a partial version of sequentiality.

Recall that a space $X$ has the weak Whyburn property (see, for example, \cite{spadaro-2016}) if, for every non-closed set $A \subset X$ there is a point $x \in \cl{X}{A} \setminus A$ and a subset $B \subset A$ such that $\cl{X}{B} \setminus B=\{x\}$. The following lemma was first proved by Tkachuk and Yaschenko in \cite[Theorem 2.7]{tka-yash-2001}, where weak Whyburn spaces were referred to as spaces with the property of \emph{weak approximation by points}. We give a simpler proof of it to make the paper self-contained.

\begin{lemma} \label{lemmawhyburn}
Every regular scattered space is weakly Whyburn.
\end{lemma}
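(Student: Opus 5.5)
Let $X$ be regular and scattered, and let $A\subset X$ be non-closed. We need a point $x\in\cl{X}{A}\setminus A$ and a set $B\subset A$ with $\cl{X}{B}\setminus B=\{x\}$. The plan is to pick $x$ by scatteredness and then take $B$ to be the trace of $A$ on a neighbourhood of $x$ that is chosen carefully using regularity.

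First, the choice of $x$. The set $\cl{X}{A}\setminus A$ is a non-empty subspace of the scattered space $X$, so it has an isolated point $x$. Choose an open set $U\ni x$ with $U\cap(\cl{X}{A}\setminus A)=\{x\}$. By regularity, choose an open $V$ with $x\in V\subset\cl{X}{V}\subset U$.

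Next, put $B=A\cap V$. Since $V$ is an open neighbourhood of $x$ and $x\in\cl{X}{A}$, we get $x\in\cl{X}{A\cap V}=\cl{X}{B}$. Also $x\notin A$, so $x\notin B$. Hence $x\in\cl{X}{B}\setminus B$.

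Then, the reverse inclusion. Take any $y\in\cl{X}{B}\setminus B$. We have $y\in\cl{X}{V}\subset U$ and $y\in\cl{X}{A}$.
\begin{itemize}
\item If $y\notin A$, then $y\in U\cap(\cl{X}{A}\setminus A)=\{x\}$, so $y=x$.
\item If $y\in A$, then $y\notin B=A\cap V$ forces $y\notin V$. Thus $y$ lies in $\cl{X}{V}\setminus V$ and in $A$, and this case must be ruled out.
\end{itemize}

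The second case is the main obstacle. A naive choice of $V$ can leave points of $A$ on its boundary $\cl{X}{V}\setminus V$, and such points can be limits of $A\cap V$. Scatteredness alone does not give clopen neighbourhoods, because $X$ need not be zero-dimensional. So I would not cut by a neighbourhood. Instead I would shrink $B$ using the isolated-point structure of $A$.

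Concretely, consider the Cantor–Bendixson ranks of points of $\cl{X}{A}$ relative to the subspace $\cl{X}{A}$. Among all points of $\cl{X}{A}\setminus A$, choose $x$ of minimal rank $\alpha$; such an $x$ is also isolated in $\cl{X}{A}\setminus A$, which is what the first step used. By regularity and scatteredness, fix an open $U\ni x$ such that $x$ is the only point of rank at least $\alpha$ in $U\cap\cl{X}{A}$. Then take $B=A\cap V$ with $\cl{X}{V}\subset U$ as before, and verify the following.
\begin{itemize}
\item Every point of $\cl{X}{B}\setminus\{x\}$ has rank below $\alpha$, by the choice of $U$.
\item By minimality of $\alpha$, each such point lies in $A$. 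It therefore remains to show it lies in $V$, so that it belongs to $B$.
\end{itemize}
For this last step I would try a transfinite induction on rank. The aim is to replace $V$ by $V'=V\setminus\bigcup\cl{X}{W}$, removing from $A\cap V$ the neighbourhoods $W$ of the boundary points of lower rank. Regularity is used at each stage to keep $x$ in the closure of what remains.

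If that bookkeeping becomes unwieldy, the fallback is a simpler route. Take $B$ to be the set of points of $A\cap V$ that are isolated in $A$, or more generally the points of some fixed low rank. Then show directly that $\cl{X}{B}\setminus B=\{x\}$, using that a limit of lower-rank points inside $\cl{X}{V}$ has strictly higher rank and hence must be $x$.
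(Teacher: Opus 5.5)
Your choice of $x$ and $V$ and your first bullet are correct, and they reproduce the paper's entire argument. The paper takes $B=V\cap A$ with $\cl{X}{V}\cap(\cl{X}{A}\setminus A)=\{x\}$ and concludes $\cl{X}{B}\setminus A=\{x\}$, which is exactly what your case $y\notin A$ shows.

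\textbf{The gap.} You never close your second case, and neither remedy you propose works as stated. For $B=A\cap V$ that case genuinely occurs, so it cannot be ``ruled out''. In $X=\omega^2+1$ with $A=\omega^2$, $x=\omega^2$, the set $V=(\omega,\omega^2]\setminus\{\omega\cdot 2\}$ is open, and $\omega\cdot 2\in A$ is a limit of $\omega+n\in A\cap V$. Your transfinite shrinking of $V$ is only announced. You give no argument that removing closed neighbourhoods of boundary points neither creates new boundary points nor loses $x$ from the closure. The fallback is false. In the same example, the isolated points of $A\cap V$ accumulate at the points $\omega\cdot n\in A\cap V$, which are not in $B$. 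The principle ``a limit of lower-rank points has higher rank'' gives rank above those points, not rank $\geq\alpha$. Choosing points of rank $\beta$ with $\alpha=\beta+1$ does work, but no such $\beta$ exists when $\alpha$ is a limit, e.g.\ $x=\omega^\omega$ in $\omega^\omega+1$.

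\textbf{The missing idea.} You need to enlarge $B$ rather than shrink it. Take $B=A\cap\cl{X}{V}$.
\begin{itemize}
\item $x\in\cl{X}{A\cap V}\subset\cl{X}{B}$ and $x\notin B$.
\item $\cl{X}{B}\subset\cl{X}{V}\cap\cl{X}{A}$. So if $y\in\cl{X}{B}\setminus B$, then $y\notin A$, since otherwise $y\in A\cap\cl{X}{V}=B$.
\item Hence $y\in\cl{X}{V}\cap(\cl{X}{A}\setminus A)=\{x\}$.
\end{itemize}
Equivalently, keep your $B=A\cap V$ and pass to $B'=\cl{X}{B}\cap A=\cl{X}{B}\setminus\{x\}$. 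Since $B\subset B'\subset\cl{X}{B}$, you get $\cl{X}{B'}=\cl{X}{B}=B'\cup\{x\}$. This one-line step is what turns the paper's conclusion $\cl{X}{B}\setminus A=\{x\}$ into the form $\cl{X}{B}\setminus B=\{x\}$ required by the definition. No Cantor--Bendixson analysis is needed.
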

\begin{proof}
Let $A \subset X$ be a non-closed set. Let $x$ be an isolated point of $\cl{X}{A} \setminus A$, then there is an neighborhood $V$ of $x$ such that $\cl{X}{V} \cap (\cl{X}{A} \setminus A)=\{x\}$. Set $B=V \cap A$. Then $\cl{X}{B} \setminus A=\{x\}$ and we are done.
\end{proof}

\begin{propo}\label{propo:existence_many_conv_seq}
 Let $X$ be pseudocompact and functionally countable. If $A\in[I(X)]^\omega$, then there exist $p\in X$ and $B\in[A]^\omega$ such that $B\rightarrow p$.
\end{propo}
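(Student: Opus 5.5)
The plan is to combine Lemma~\ref{lemma:pscomp-scattered} with Lemma~\ref{lemmawhyburn}, and to use pseudocompactness only through one elementary observation. First, by Lemma~\ref{lemma:pscomp-scattered}, $X$ is scattered. Since pseudocompact spaces are Tychonoff, $X$ is regular, so by Lemma~\ref{lemmawhyburn} it is weakly Whyburn.

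\emph{Observation.} If $C\subset I(X)$ is infinite, then $C$ is not closed. Suppose $C$ were closed. Since $C\subset I(X)$, $C$ is also open, hence clopen and discrete. Pick a countably infinite $\{c_n:n\in\omega\}\subset C$. The set $D=\{c_n:n\in\omega\}$ is itself clopen, being closed in the closed discrete set $C$ and a union of isolated points. Then the function equal to $n$ at $c_n$ and $0$ off $D$ is continuous and unbounded, contradicting pseudocompactness. (This is essentially the easy direction of Lemma~\ref{lemma:pseudocompact_char}, but it needs no density hypothesis.)

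\emph{Finding $B$ and $p$.} Given $A\in[I(X)]^\omega$, the Observation shows $A$ is not closed. The weak Whyburn property then gives $p\in \cl{X}{A}\setminus A$ and $B\subset A$ with $\cl{X}{B}\setminus B=\{p\}$. Since finite sets are closed in a Hausdorff space, $B$ is infinite, so $B\in[A]^\omega$.

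\emph{Convergence.} Let $U$ be an open set with $p\in U$. The set $B\setminus U$ lies in the closed set $X\setminus U$ and also in $\cl{X}{B}=B\cup\{p\}$. Hence $\cl{X}{B\setminus U}\subset (B\cup\{p\})\setminus U=B\setminus U$, so $B\setminus U$ is a closed subset of $I(X)$. By the Observation, $B\setminus U$ is finite, which gives $B\rightarrow p$.

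I do not expect a real obstacle here. The only point needing care is to apply the pseudocompactness argument to arbitrary infinite closed subsets of $I(X)$, not merely to $A$, because it is needed again in the convergence step.
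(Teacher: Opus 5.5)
Your proof is correct and follows the paper's route: Lemma~\ref{lemma:pscomp-scattered} gives scatteredness, the weak Whyburn property gives $B\subset A$ with $\cl{X}{B}=B\cup\{p\}$, and pseudocompactness rules out an infinite closed set $B\setminus U$ of isolated points. The only difference is that you prove the needed easy direction of Lemma~\ref{lemma:pseudocompact_char} directly, which needs no density of $I(X)$, whereas the paper cites the lemma after noting that $I(X)$ is dense.
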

\begin{proof}
 Assume that $X$ is infinite, otherwise the result is vacuously true. By Lemma \ref{lemma:pscomp-scattered}, $X$ is scattered. Then $I(X)$  is infinite and dense. So let $A\in[I(X)]^\omega$; notice that by Lemma \ref{lemma:pseudocompact_char}, $A$ is not closed. By Lemma \ref{lemmawhyburn} there exists $B\subset A$ and $p\in X\setminus A$ such that $\cl{X}{B}=B\cup\{p\}$. We claim that in fact $B\rightarrow p$. Otherwise there exists a neighborhood $U$ of $p$ such that $B\setminus U$ is infinite. However, this implies that $B\setminus U$ is an infinite subset of $I(X)$ without limit points, which contradicts Lemma \ref{lemma:pseudocompact_char}.
\end{proof}

Finally, the following result provides an answer to Tkachuk's question.
 
\begin{thm}\label{thm:product_pseudocompact_FC}
 Let $X$ and $Y$ be pseudocompact and functionally countable. Then $X\times Y$ is pseudocompact and hence functionally countable.
\end{thm}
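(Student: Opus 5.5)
The plan is to prove that $X\times Y$ is pseudocompact. Functional countability then follows from Theorem \ref{tka:char_pseudocompact_products}. Both factors are scattered by Lemma \ref{lemma:pscomp-scattered}, so $I(X)$ is dense in $X$ and $I(Y)$ is dense in $Y$. Hence $I(X\times Y)=I(X)\times I(Y)$ is dense in $X\times Y$. The product is Tychonoff, so Lemma \ref{lemma:pseudocompact_char} applies to it. It therefore suffices to show that every $D\in[I(X)\times I(Y)]^\omega$ has $\cl{X\times Y}{D}\setminus D\neq\emptyset$.

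Fix such a $D$ and consider its projections to $X$ and $Y$. If some fibre is infinite, say $D\cap(\{a\}\times Y)$ with $a\in I(X)$, then $\{a\}\times Y$ is a clopen copy of $Y$. Pseudocompactness of $Y$ and Lemma \ref{lemma:pseudocompact_char} give an accumulation point $q\notin$ the projection of that fibre, and $(a,q)$ is then in the closure of $D$ but not in $D$. The symmetric argument handles an infinite fibre in the other direction. Otherwise both projections restricted to $D$ are finite-to-one. Passing to an infinite subset, I may assume $D=\{(a_n,b_n):n\in\omega\}$ with the $a_n$ pairwise distinct and the $b_n$ pairwise distinct.

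In this main case I will apply Proposition \ref{propo:existence_many_conv_seq} twice. First, $\{a_n\}\in[I(X)]^\omega$, so there are an infinite $N_0\subset\omega$ and $p\in X$ with $\{a_n:n\in N_0\}\rightarrow p$. Next, $\{b_n:n\in N_0\}\in[I(Y)]^\omega$, so there are an infinite $N_1\subset N_0$ and $q\in Y$ with $\{b_n:n\in N_1\}\rightarrow q$. Convergence passes to infinite subsets, so $(a_n,b_n)\to(p,q)$ along $N_1$. Finally, $(p,q)\notin D$: a convergent injective sequence of isolated points cannot converge to one of its own terms, since an isolated point has the singleton neighbourhood. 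Thus $p\notin\{a_n:n\in N_1\}$. Moreover $p\neq a_n$ for every other $n$ as well, because otherwise $p$ would be isolated, and an isolated point cannot be the limit of a nontrivial injective sequence. So $p\notin I(X)$ and $(p,q)\notin D$.

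I expect the main obstacle to be the diagonal case, where one needs simultaneous convergence in both coordinates. Pseudocompactness alone only yields accumulation points, and these are not preserved under products. This is exactly why the convergent-subsequence conclusion of Proposition \ref{propo:existence_many_conv_seq} is needed. The remaining steps are routine: the reduction via Lemma \ref{lemma:pseudocompact_char} and the fibre cases.
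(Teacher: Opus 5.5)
Your proof is correct and follows essentially the same route as the paper: reduce via Lemma \ref{lemma:pseudocompact_char} to countable subsets of $I(X)\times I(Y)$, then apply Proposition \ref{propo:existence_many_conv_seq} twice to get a subsequence converging in both coordinates. The differences are cosmetic: you handle the degenerate case of an infinite fibre by applying Lemma \ref{lemma:pseudocompact_char} inside the clopen copy $\{a\}\times Y$, whereas the paper first extracts a convergent first-coordinate sequence and then passes to a constant second coordinate; and you explicitly check that $(p,q)\notin D$, which the paper leaves implicit.
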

\begin{proof}
 By Lemma \ref{lemma:pscomp-scattered} above, it follows that $X$ and $Y$ are scattered. Notice that $X\times Y$ is also scattered and in fact $I(X\times Y)=I(X)\times I(Y)$. Thus, by Lemma \ref{lemma:pseudocompact_char} it is sufficient to prove that any $A\in[I(X)\times I(Y)]^\omega$ has a limit point in $X\times Y$.
 
 Let $\pi_X\colon X\times Y\to X$ and $\pi_Y\colon X\times Y\to Y$ be the two projections. Since $A$ is infinite, at least one of $\pi_X[A]$ or $\pi_Y[A]$ is infinite; assume that $\pi_X[A]$ is infinite. Since $\pi_X[A]\in[I(X)]^\omega$ by Proposition \ref{propo:existence_many_conv_seq} there exists a faithfully indexed sequence $\{x_n\colon n\in\omega\}\subset \pi_X[A]$ and $p\in X$ such that $\{x_n\colon n\in\omega\}\rightarrow p$. For each $n\in\omega$, let $y_n\in Y$ be such that $\langle x_n,y_n\rangle\in A$.
 
 If the set $\{y_n\colon n\in\omega\}$ is finite, then there is $S\in[\omega]^{\omega}$ and $y\in\{y_n\colon n\in\omega\}$ such that $y=y_n$ for all $n\in S$. It follows then that $\{\langle x_n,y\rangle\colon n\in S\}$ is an infinite subset of $A$ such that $\{\langle x_n,y\rangle\colon n\in S\}\rightarrow \langle p,y\rangle$.
 
 Assume now that $\{y_n\colon n\in\omega\}$ is infinite. Since $\{y_n\colon n\in\omega\}\in[I(Y)]^\omega$, by Proposition \ref{propo:existence_many_conv_seq} there exists an infinite set $S$ and $q\in Y$ such that $\{y_n\colon n\in S\}\rightarrow q$. Since any subsequence of a convergent sequence is convergent to the same limit, we also know that $\{x_n\colon n\in S\}\rightarrow p$. It follows that $\{\langle x_n,y_n\rangle\colon n\in S\}\rightarrow \langle p,q\rangle$.
 
 In both cases, we have shown that there is a subset of $A$ with a limit point. By Lemma \ref{lemma:pseudocompact_char} the space $X\times Y$ is pseudocompact. And by Tkachuk's Theorem \ref{tka:char_pseudocompact_products}, the space $X\times Y$ is functionally countable.
 
\end{proof}

\begin{coro}\label{coro:finite_product_pseudocompact_FC}
 Every finite product of spaces that are pseudocompact and functionally countable is also pseudocompact and functionally countable. 
\end{coro}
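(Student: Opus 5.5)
The plan is induction on the number of factors, with Theorem \ref{thm:product_pseudocompact_FC} as the inductive step. For one factor there is nothing to prove, and the empty product is a point, which is trivially pseudocompact and functionally countable. Now assume that for some $n\geq 1$, every product of $n$ spaces that are pseudocompact and functionally countable is again pseudocompact and functionally countable.

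Take spaces $X_1,\dots,X_{n+1}$, each pseudocompact and functionally countable. Set $Z=X_1\times\cdots\times X_n$. By the inductive hypothesis, $Z$ is pseudocompact and functionally countable. The product $X_1\times\cdots\times X_{n+1}$ is homeomorphic to $Z\times X_{n+1}$ via the canonical map. Pseudocompactness and functional countability are both invariant under homeomorphism: a continuous real-valued function on one space composed with the homeomorphism gives one on the other, with the same image. So it suffices to show that $Z\times X_{n+1}$ is pseudocompact and functionally countable. This follows directly from Theorem \ref{thm:product_pseudocompact_FC} applied to $X=Z$ and $Y=X_{n+1}$, which completes the induction.

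The only point needing care is the Tychonoff requirement built into the paper's definition of pseudocompactness. Finite products of Tychonoff spaces are Tychonoff, and in any case Theorem \ref{thm:product_pseudocompact_FC} already delivers pseudocompactness of the product as a conclusion. So I do not expect any real obstacle; the argument is routine bookkeeping.
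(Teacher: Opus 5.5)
Your proof is correct and is exactly the argument the paper intends. The corollary is stated there without proof, as an immediate consequence of Theorem \ref{thm:product_pseudocompact_FC} by induction on the number of factors, which is what you do.
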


\section{Finite products and \texorpdfstring{$\sigma$}{sigma}-products of ordinals}

Let $\lambda$ be a cardinal, $\mathcal{X}=\{X_\alpha\colon\alpha<\lambda\}$ a family of spaces and $p\in\prod_{\alpha<\lambda}{X_\alpha}$. For each $x\in\prod_{\alpha<\lambda}{X_\alpha}$ let $\supp_p(x)=\{\alpha<\lambda\colon x(\alpha)\neq p(\alpha)\}$. The $\sigma$-product of the family $\mathcal{X}$ with respect to $p$ is the set $$\textstyle\sigma\left(\prod_{\alpha<\lambda}{X_\alpha},p\right)=\left\{x\in\prod_{\alpha<\lambda}{X_\alpha}\colon \lvert\supp_p(x)\rvert<\omega\right\}.$$

Ordinals will be considered topological spaces with the order topology. If $\{\gamma_\xi\colon\xi<\lambda\}$ is a set of ordinals, the element of the product $\prod_{\xi<\lambda}{\gamma_\xi}$ that has constant value $0$ will be denoted by $\mathbf{0}$. In \cite{kem-sz-2004}, Kemoto and Szeptycki studied the topology of $\sigma$-products of ordinals and, among other things, obtained the following result.

\begin{thm}\label{thm:kemoto-szeptycki}
 Let $\{\gamma_\xi\colon\xi<\kappa\}$ be a set of ordinals.
 \begin{enumerate}[label=(\roman*)]
  \item\label{thm:kemoto-szeptycki-a}\cite[Corollary 3.8]{kem-sz-2004} $\sigma(\prod_{\xi<\kappa}{\gamma_\xi},\mathbf{0})$ is normal if and only if  $\prod_{\xi\in F}{\gamma_\xi}$ is normal for all $F\in[\kappa]\sp{<\omega}\setminus\{\emptyset\}$.
  \item\label{thm:kemoto-szeptycki-b}\cite[Theorem 3.9]{kem-sz-2004} $\sigma(\prod_{\xi<\kappa}{\gamma_\xi},\mathbf{0})$ is functionally countable.
 \end{enumerate} 
\end{thm}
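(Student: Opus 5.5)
Since the statement collects two results of Kemoto and Szeptycki, I would prove the two items separately. Both rest on the same decomposition. For $F\in[\kappa]^{<\omega}$ let $\Sigma_F=\{x\in\sigma(\prod_{\xi<\kappa}\gamma_\xi,\mathbf{0})\colon \supp_{\mathbf 0}(x)\subset F\}$. Each $\Sigma_F$ is closed in the $\sigma$-product and homeomorphic to $\prod_{\xi\in F}\gamma_\xi$, and the $\sigma$-product is $\bigcup_F\Sigma_F$. In each part I would first settle the finite products $\Sigma_F$, then reduce an arbitrary instance to countably many coordinates, and finally handle the countable case as a countable union of the closed pieces $\Sigma_F$.

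For \ref{thm:kemoto-szeptycki-b}, step one is that every finite product of ordinals is functionally countable. I would take $f\colon\prod_{\xi\in F}\gamma_\xi\to\mathbb R$ and argue by induction on $|F|$ and on the ordinals, using a pressing-down argument. On a coordinate of uncountable cofinality, $f$ is eventually constant along closed unbounded sets in that coordinate, uniformly over the other coordinates after passing to a club. This reduces the uncountable-cofinality case to smaller products plus a countable ``tail'' value. A coordinate of countable cofinality is a countable union of initial segments.

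Step two is to find a countable $C\subset\kappa$ such that $f(x)=f(x\restriction C)$ for every $x$, where $x\restriction C$ denotes $x$ with all coordinates outside $C$ reset to $0$. I would take a countable elementary submodel $M$ containing $f$ and the family $\{\gamma_\xi\}$, and put $C=\kappa\cap M$. I would then show $f(x)=f(x\restriction C)$ by induction on $|\supp_{\mathbf 0}(x)\setminus C|$, removing one coordinate $\xi\notin M$ at a time. If the value changed, this would be witnessed by rational intervals and basic open boxes. Elementarity would then produce infinitely many such witnessing coordinates, including ones inside $M$. Continuity at points with many coordinates near $0$, together with the finiteness of supports, would then give a contradiction.

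Granting step two, the image of $f$ equals the image of $f$ on $\bigcup_{F\in[C]^{<\omega}}\Sigma_F$. This is a countable union of finite products, each with countable image by step one. I expect step two to be the main obstacle, since the precise reflection argument must exploit that ordinal neighbourhoods of $0$ are clopen.

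For \ref{thm:kemoto-szeptycki-a}, necessity is immediate: each $\prod_{\xi\in F}\gamma_\xi\cong\Sigma_F$ is a closed subspace, and normality is closed-hereditary. For sufficiency, take disjoint closed sets $A$ and $B$ and again a countable elementary submodel $M$ containing $A$ and $B$. I would aim to show that the ``projections'' $\{x\restriction C\colon x\in A\}$ and $\{x\restriction C\colon x\in B\}$ have disjoint closures. The proof would be a closing-off argument similar to step two, where again the main difficulty lies. This reduces the problem to the $\sigma$-product over countably many coordinates. That space is a countable increasing union of closed normal subspaces $\Sigma_{F_n}$, which I would handle via Urysohn functions extended step by step along the increasing chain $F_0\subset F_1\subset\cdots$. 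Each extension would use Tietze's theorem inside the normal space $\Sigma_{F_{n+1}}$ and keep the separating functions coherent on the overlaps. The function obtained on the countable $\sigma$-product is then pulled back along $x\mapsto x\restriction C$, which is continuous, to separate $A$ and $B$.
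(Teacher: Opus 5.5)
The paper does not prove this statement. Both items are quoted from Kemoto and Szeptycki. The only related argument in the paper is Theorem \ref{thm:sigma-prod}, from which (ii) follows because exponentially separable spaces are functionally countable; nothing in the paper addresses (i). Judged on its own terms, your proposal has a genuine gap in each part.

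In (i) the final step fails. Take a $\sigma$-product over countably many coordinates, infinitely many of them with $\gamma_\xi\geq 2$. It does not carry the inductive-limit topology of the closed pieces $\Sigma_{F_n}$. Since $0$ is isolated in every ordinal, $\mathbf{0}$ is isolated in each finite product $\Sigma_{F_n}$. Yet every basic neighbourhood of $\mathbf{0}$ in the $\sigma$-product contains, for all but finitely many such $\xi$, the point with support $\{\xi\}$ and value $1$ there. So the function equal to $0$ at $\mathbf{0}$ and $1$ elsewhere is continuous on every $\Sigma_{F_n}$ but not on the union. Hence Tietze extensions glued coherently along $F_0\subset F_1\subset\cdots$ need not be continuous: nothing in Tietze's theorem applied inside $\Sigma_{F_{n+1}}$ stops the extension from being $1$ at such a point with $\xi\in F_{n+1}\setminus F_n$ while it is $0$ at $\mathbf{0}$. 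More generally, a countable increasing union of closed normal subspaces need not be normal; the Moore plane is the union of the closed normal subspaces $\{y=0\}\cup\{y\geq 1/n\}$. The missing idea is control over how points acquiring new coordinates accumulate at points of smaller support. That is exactly what the two claims in the proof of Lemma \ref{lemma:sigma-prod-c1} handle: points with pairwise disjoint new supports accumulate in $X_m$. Your preliminary reduction to countably many coordinates (disjoint closures of the projections) is likewise only announced, not argued.

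In (ii), your step two carries the weight, and the elementarity mechanism you describe does not apply as stated. You compare $f(x)$ with $f(x')$, where $x'$ is $x$ with a coordinate $\eta\notin M$ reset to $0$. In general $x'$ has values outside $M$, and after the first step of your induction also coordinates outside $M$. So the rational intervals and basic boxes witnessing a change of $f$ involve ordinal parameters not in $M$, and elementarity cannot be invoked. Continuity at $x'$ only shows that the set of coordinates along which $f$ changes at $x'$ is countable. To place that set inside $M$ you would need, for instance, that the union of these sets over all $y$ with $\supp_{\mathbf 0}(y)\subseteq s$ is countable for each finite $s$. That requires a pressing-down argument over the possibly uncountably many values of $y$, and further work once several coordinates lie outside $M$. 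Step one's ``uniformly over the other coordinates after passing to a club'' is similarly only asserted.

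The paper never needs countable dependence:
\begin{itemize}
\item Lemma \ref{lemma:product_unctble_cofin_ES} handles finite products by noting that countably many chosen points lie in a compact scattered subproduct.
\item Lemma \ref{lemma:sigma-prod-c1} climbs through the closed sets $X_m$ with a $\Delta$-system argument.
\item Theorem \ref{thm:sigma-prod} absorbs factors of countable cofinality by writing the $\sigma$-product as a countable union of $\sigma$-products of ordinals that are successors or of uncountable cofinality.
\end{itemize}
This yields the stronger conclusion of exponential separability, and hence (ii).
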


We will prove a strengthening of Theorem \ref{thm:kemoto-szeptycki}\ref{thm:kemoto-szeptycki-b} in Theorem \ref{thm:sigma-prod}.

\begin{lemma}\label{lemma:product_unctble_cofin_ES}
 Any finite product of ordinals, each of which is either a successor or has uncountable cofinality, is exponentially separable.
\end{lemma}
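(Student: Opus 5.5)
The plan is to reduce exponential separability to two facts: such a product is sequentially compact, and the closure of a countable set of ordinals is countable. No elementary submodels should be needed.

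Let $X=\prod_{i<n}\gamma_i$, where each $\gamma_i$ is either a successor ordinal or has uncountable cofinality, and fix a countable family $\mathcal{F}$ of closed subsets of $X$. For each of the countably many finite $\mathcal{H}\subset\mathcal{F}$ with $\bigcap\mathcal{H}\neq\emptyset$, choose a point $a_\mathcal{H}\in\bigcap\mathcal{H}$. Let $A_0$ be the countable set of these points. For each $i<n$, let $D_i=\pi_i[A_0]\subset\gamma_i$, which is countable, and let $C_i=\cl{\gamma_i}{D_i}$. The candidate strongly dense set is $A=\prod_{i<n}C_i$.

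First I will check that $A$ is countable. A point of $C_i\setminus D_i$ is the supremum of an initial segment of the well-ordered countable set $D_i$, namely of $D_i\cap\beta$ for some $\beta\in D_i$, or of all of $D_i$. So there are at most $|D_i|+1$ such points, hence $C_i$ is countable and so is the finite product $A$.

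Next I verify strong density. Let $\mathcal{G}\subset\mathcal{F}$ with $\bigcap\mathcal{G}\neq\emptyset$. If $\mathcal{G}$ is finite, then $a_\mathcal{G}\in A_0\subset A$ and we are done. If $\mathcal{G}$ is infinite, enumerate it as $\{G_k\colon k\in\omega\}$. For each $k$, the set $\bigcap_{j\le k}G_j$ is nonempty, so $x_k=a_{\{G_0,\dots,G_k\}}\in A_0$ lies in $\bigcap_{j\le k}G_j$. Every sequence of ordinals has a nondecreasing subsequence (Ramsey, or directly via well-foundedness). Applying this successively to the $n$ coordinates gives an infinite $S\subset\omega$ such that, for each $i<n$, the sequence $(x_k(i))_{k\in S}$ is nondecreasing. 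Its limit in the order topology is $z(i)=\sup_{k\in S}x_k(i)$, provided this supremum lies in $\gamma_i$.

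The one step that uses the hypothesis is showing $z(i)\in\gamma_i$. If $\gamma_i=\delta+1$, the supremum is at most $\delta$. If $\operatorname{cf}(\gamma_i)>\omega$, a countable subset of $\gamma_i$ is bounded below $\gamma_i$. Without the hypothesis, the sequence could be cofinal in $\gamma_i$, as in $\omega$ or $\omega_1+\omega$, and the construction would fail. So $z=(z(i))_{i<n}\in X$, and $x_k\to z$ along $S$ because convergence in a finite product is coordinatewise. Each $z(i)$ is a limit of points of $D_i$, so $z(i)\in C_i$ and $z\in A$. Finally, for each $j$, all but finitely many $x_k$ with $k\in S$ lie in the closed set $G_j$, so $z\in G_j$. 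Hence $z\in A\cap\bigcap\mathcal{G}$, which proves that $X$ is exponentially separable.
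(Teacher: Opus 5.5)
Your proof is correct, but it finishes differently from the paper's. Both arguments begin the same way, by choosing a witness in each nonempty finite intersection.

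The paper then takes the closure $Y$ of these witnesses. It shows $Y$ is compact by trapping it inside a product of successor ordinals $\prod_i\xi_i$; this is where the cofinality hypothesis is used. Next it invokes Tkachuk's theorem that compact scattered spaces are exponentially separable, which gives a countable $C\subset Y$ strongly dense in the traces $A_n\cap Y$. Finally, compactness of $Y$ (the finite intersection property) shows that $\bigcap_{n\in N}(A_n\cap Y)\neq\emptyset$ whenever $\bigcap_{n\in N}A_n\neq\emptyset$.

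You avoid both the citation and the compactness step:
\begin{enumerate}[label=(\roman*)]
\item The elementary fact that $\cl{\gamma_i}{D_i}$ is countable makes $\prod_i\cl{\gamma_i}{D_i}$ a countable candidate set directly.
\item The finite-intersection argument is replaced by an explicit limit of a coordinatewise nondecreasing subsequence. The hypothesis on the $\gamma_i$ is used exactly to keep the suprema inside $\gamma_i$.
\end{enumerate}

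Your route is fully self-contained. It also shows something the paper does not notice: the paper's $Y$ lies inside $\prod_i\cl{\gamma_i}{\pi_i[\{p_F\colon F\in\mathcal{F}\}]}$ and is therefore countable, so one could simply take $C=Y$ there.

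The paper's route is shorter given the cited result. Its structure, compact scattered closure plus the finite intersection property, does not need closures of countable sets to be countable. It would therefore apply in settings where such closures are compact scattered but uncountable, whereas your argument relies essentially on that countability.
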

\begin{proof}
 Given $n\in\omega$, let $\{\gamma_i\colon i\leq n\}$ be a collection of ordinals each of which is either a successor or has uncountable cofinality, let $X=\prod_{i\leq n}\gamma_i$, and let $\{A_n\colon n\in\omega\}$ be a collection of closed subsets of $X$.
 
 Define $\mathcal{F}=\left\{F\in[\omega]\sp{<\omega}\colon {\textstyle\bigcap_{n\in F}A_n\neq\emptyset}\right\}$, which is a countable set. For each $F\in\mathcal{F}$, choose $p_F\in\bigcap_{n\in F}A_n$. Define $$Y=\cl{X}{\{p_F\colon F\in\mathcal{F}\}}.$$
 
 We claim that $Y$ is in fact compact. For each $i\leq n$, we define a successor ordinal $\xi_i$. If $\gamma_i$ is a successor, let $\xi_i=\gamma_i$. Otherwise, $\gamma_i$ has uncountable cofinality and let $\xi_i=\sup\{p_F(i)\colon F\in\mathcal{F}\}+1<\gamma_i$. Then notice that $Y\subset\prod_{i\leq n}\xi_i\subset X$ and $\prod_{i\leq n}\xi_i$ is compact. Thus, $Y$ is compact as well.
 
 Since $Y$ is a compact and scattered space, it is exponentially separable (by \cite[Corollary 3.14]{tka-nice_subclass}). Thus, there exists a countable set $C\subset Y$ that is strongly dense in $\{A_n\cap Y\colon n\in\omega\}$. We claim that $C$ is also strongly dense in $\{A_n\colon n\in\omega\}$.
 
 So let $N\in[\omega]\sp{\leq\omega}$ such that $\bigcap_{n\in N}A_n\neq\emptyset$. Notice that this implies that $\bigcap_{n\in N\cap (m+1)}A_n\neq\emptyset$ for each $m\in\omega$; that is, $N\cap(m+1)\in\mathcal{F}$ for each $m\in\omega$. By the definition of $Y$, we obtain that $$\textstyle Y\cap\left(\bigcap_{n\in N\cap (m+1)}A_n\right)\neq\emptyset$$ for all $m\in\omega$. Since $Y$ is compact, it follows that $\bigcap_{n\in N}(A_n\cap Y)\neq\emptyset$. From the definition of $C$ it follows that $C\cap\left(\bigcap_{n\in N}(A_n\cap Y)\right)\neq\emptyset$; in particular $C\cap\left(\bigcap_{n\in N}A_n\right)\neq\emptyset$. This completes the proof.
\end{proof}

\begin{lemma}\label{lemma:sigma-prod-c1}
 For every collection $\{\gamma_\xi\colon\xi<\lambda\}$ of ordinals, all of which are either successors or have uncountable cofinality, $\sigma(\prod_{\xi<\lambda}{\gamma_\xi},\mathbf{0})$ is exponentially separable.
\end{lemma}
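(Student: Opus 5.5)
Given countably many closed sets $\{A_n\colon n\in\omega\}$ in $\Sigma=\sigma(\prod_{\xi<\lambda}\gamma_\xi,\mathbf{0})$, we mimic the proof of Lemma \ref{lemma:product_unctble_cofin_ES}. Let $\mathcal{F}=\{F\in[\omega]^{<\omega}\colon\bigcap_{n\in F}A_n\neq\emptyset\}$ and pick $p_F\in\bigcap_{n\in F}A_n$ for each $F\in\mathcal{F}$. Since each $p_F$ has finite support, the set $S=\bigcup_{F\in\mathcal{F}}\supp_{\mathbf{0}}(p_F)$ is countable. For each $\xi\in S$ define the successor ordinal $\xi'$ exactly as before: $\xi'=\gamma_\xi$ if $\gamma_\xi$ is a successor, and otherwise $\xi'=\sup\{p_F(\xi)\colon F\in\mathcal{F}\}+1<\gamma_\xi$, using uncountable cofinality. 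Then every $p_F$ lies in
$$K=\Big\{x\in\Sigma\colon \supp_{\mathbf{0}}(x)\subset S,\ x(\xi)<\xi'\ \text{for all }\xi\in S\Big\},$$
which is the $\sigma$-product of the compact ordinals $\xi'$, $\xi\in S$, embedded in $\Sigma$ via extension by $0$.

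The difficulty is that $K$ is not compact when $S$ is infinite; it is only countably compact. The key point is that compactness in the earlier proof was used only to pass from nonempty finite intersections to a nonempty countable intersection, and countable compactness suffices for that. So the next step is to verify that a $\sigma$-product of compact ordinals is countably compact. Given a sequence $x_k$ in $K$, enumerate $S=\{s_j\colon j\in\omega\}$. By a diagonal argument we can pass to a subsequence along which each coordinate $x_k(s_j)$ converges; here we use that compact ordinals are sequentially compact, so we may take subsequences converging in each coordinate. The limit $x$ lies in $\prod\xi'$. It may fail to lie in $K$ because its support could be infinite. To fix this we use a $\Delta$-system style trick: refine the sequence so that the supports of the $x_k$ form a $\Delta$-system with root $R$, or so that their sizes grow. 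Outside $R$, for each fixed coordinate $s_j$ only finitely many $x_k$ are nonzero there, so along the refined subsequence the coordinate limit is $0$. Hence the limit has support contained in $R$, is in $K$, and is an accumulation point. The case of unbounded support sizes needs care: after making the supports a $\Delta$-system in the generalized sense (for each coordinate, eventually constant membership), the limit support is the set of coordinates eventually in all supports, and I expect this to be finite, since a $\Delta$-system of finite sets has finite root. This is the step I expect to be the main obstacle, and I would handle it with the standard $\Delta$-system lemma for countably many finite sets. That lemma gives an infinite subfamily that is a $\Delta$-system, or alternatively the "eventually disjoint except root" structure obtained by diagonalization.

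Next, each finite subproduct $\prod_{\xi\in G}\xi'$ for $G\in[S]^{<\omega}$ is compact and scattered. Hence $K$ is a countable increasing union of compact scattered subspaces $K_m$, defined as $K$ restricted to supports within $\{s_j\colon j<m\}$, each exponentially separable by \cite[Corollary 3.14]{tka-nice_subclass}. For each $m$ pick a countable $C_m\subset K_m$ strongly dense in $\{A_n\cap K_m\}$, and let $C=\bigcup_m C_m$.

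Finally, we verify strong density. Let $N\subset\omega$ with $\bigcap_{n\in N}A_n\neq\emptyset$. Then $N\cap(m+1)\in\mathcal{F}$ for every $m$, so $p_{N\cap(m+1)}\in K\cap\bigcap_{n\le m,\,n\in N}A_n$. Countable compactness of $K$ yields a point $z\in K\cap\bigcap_{n\in N}A_n$. The support of $z$ is finite, so $z\in K_m$ for some $m$. Strong density of $C_m$ in $K_m$ then gives $C\cap\bigcap_{n\in N}A_n\neq\emptyset$.
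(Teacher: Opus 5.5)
The gap is the claim that $K$ is countably compact. This is false whenever $S$ is infinite. Every $\xi\in S$ satisfies $\xi'\geq 2$. So if $S=\{s_j\colon j\in\omega\}$ and $x_k$ takes the value $1$ at $s_0,\dots,s_k$ and $0$ elsewhere, then $(x_k)_k$ converges in the Hausdorff space $\prod_{\xi\in S}\xi'$ to the point that equals $1$ on all of $S$. That point has infinite support, so the sequence has no accumulation point in $K$.

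The $\Delta$-system lemma cannot rescue this. For countably many finite sets it needs a uniform bound on their sizes, and the chain $\{s_0\}\subset\{s_0,s_1\}\subset\cdots$ has no $\Delta$-subsystem with three members.

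Nor is this only a defect in the justification: the construction itself can fail. Take $\lambda=\omega$, all $\gamma_\xi=2$, and the clopen sets
\[
A_n=\{x\in\Sigma\colon x(i)=1\text{ for }1\leq i\leq n\}\cup\{x\in\Sigma\colon x(0)=1\}.
\]
They decrease, so $\bigcap_{n\in F}A_n=A_{\max F}\neq\emptyset$. You may therefore choose $p_F$ to be the characteristic function of $\{i\colon 1\leq i\leq \max F\}$. Then $S=\omega\setminus\{0\}$ and every point of $K$ vanishes at $0$. But $\bigcap_{n<\omega}A_n=\{x\in\Sigma\colon x(0)=1\}$ is nonempty and disjoint from $K\supseteq C$, so your $C$ is not strongly dense.

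What is missing is control of the support size. The paper works in $X_n=\{f\colon\lvert\supp_{\mathbf{0}}(f)\rvert\leq n\}$, which is closed in the full product. There supports are uniformly bounded, so limits stay in $X_n$ and the $\Delta$-system lemma applies (Claims 1 and 2). Each $X_n$ is shown exponentially separable by induction on $n$, and then the countable union is taken.

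Your reflection idea can be saved in the same spirit, without the induction:
\begin{enumerate}[label=(\roman*)]
\item For each $k\in\omega$ and $F\in[\omega]^{<\omega}$, choose $p_{F,k}\in X_k\cap\bigcap_{n\in F}A_n$ whenever this set is nonempty. Define $S$ and the ordinals $\xi'$ from all these points.
\item Let $K_k=X_k\cap\bigl(\prod_{\xi\in S}\xi'\times\prod_{\xi\notin S}\{0\}\bigr)$. It is closed in a compact product, hence compact.
\item $K_k$ is scattered. In a nonempty $Y\subseteq X_k$, a point $y$ of maximal support size, with $t=\supp_{\mathbf{0}}(y)$, has a neighborhood meeting $Y$ only inside $S_t$, which is a finite product of ordinals.
\item If $z\in\bigcap_{n\in N}A_n$ has support of size $k$, then every finite subintersection meets $K_k$. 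Compactness then gives a point of $K_k\cap\bigcap_{n\in N}A_n$.
\item Take the union of countable sets $C_k$ strongly dense in $\{A_n\cap K_k\colon n\in\omega\}$.
\end{enumerate}
As written, however, your proof does not establish the lemma.
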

\begin{proof}
 For each $n\in\omega$, we denote
 $$\textstyle
 X_n=\left\{f\in\prod_{\xi<\lambda}{\gamma_\xi}\colon \lvert\supp_{\mathbf{0}}(f)\rvert\leq n\right\}
 $$
 Since the countable union of exponentially separable spaces is exponentially separable (\cite[Proposition 3.2(d)]{tka-nice_subclass}), it is sufficient to prove that $X_n$ is exponentially separable for all $n\in\omega$. 
 
 We will need the following facts.
 \begin{enumerate}[label=(\alph*)]
  \item $X_0=\{\mathbf{0}\}$.
  \item For all $n<\omega$, $X_n$ is closed in $\prod\{\gamma_\xi\colon \xi<\lambda\}$.
 \end{enumerate}

Then $X_0$ is clearly exponentially separable.
Now, inductively assume that $X_m$ is exponentially separable, let us prove that $X_{m+1}$ is also exponentially separable.

Given $t\in[\lambda]\sp{m+1}$, let
$$
S_t=\left\{f\in\prod_{\xi<\lambda}{\gamma_\xi}\colon\supp_{\mathbf{0}}(f)=t\right\}.
$$
Notice that $X_{m+1}\setminus X_m=\bigcup\{S_t\colon t\in[\lambda]\sp{m+1}\}$. Clearly, $S_t$ is homeomorphic to $\prod_{\xi\in F}\gamma_\xi$ so we know that it is exponentially separable by Lemma \ref{lemma:product_unctble_cofin_ES}.

\setcounter{claim}{0}
\begin{claim}\label{X1}
Let $G$ be a closed subset of $X_{m+1}$. Then either the set $\mathcal{I}(G)=\{t\in[\lambda]\sp{m+1}\colon G\cap S_{t}\neq\emptyset\}$ is countable, or $G\cap X_m\neq\emptyset$. 
\end{claim}
\begin{proof}[Proof of Claim \ref{X1}]
Assume that the first condition does not hold. We will prove that the second necessarily holds. By the $\Delta$-system lemma, there exists $p\in[\lambda]\sp{<\omega}$ and $\{t_\alpha\colon\alpha<\omega_1\}\subset\mathcal{I}(G)$ such that $t_\alpha\cap t_\beta=p$ if $\alpha<\beta<\omega_1$. For each $\alpha<\omega_1$, let $f_\alpha\in G\cap S_{t_\alpha}$. We claim there exists $g\in\prod_{\xi<\lambda}{\gamma_\xi}$ that is a limit point of $\{f_\alpha\colon\alpha<\omega_1\}$ and $\supp_{\mathrm{0}}{(g)}\subset p$.

Let $k=\lvert p\rvert$. If $k=0$, then $p=\emptyset$; in this case let $g=\mathbf{0}$, which is clearly a limit point of $\{f_\alpha\colon\alpha<\omega_1\}$.

Now, assume that $k>0$ and let $p=\{\xi_j\colon j\leq k\}$. In $k-1$ steps, choose a decreasing collection of sets $\{A_j\colon j\leq k\}\subset[\omega_1]\sp\omega$ and ordinals $\beta_j\in\kappa_{\xi_j}$ such that $\{f_\alpha(\xi_i)\colon \alpha\in A_j\}\rightarrow\beta_j$ for each $j<k$; this is possible since each of the ordinals in $\{\gamma_{\xi_j}\colon j\leq k\}$ is sequentially compact. Define $g\in\prod_{\xi<\lambda}{\gamma_\xi}$ such that $g(\xi_j)=\beta_j$ for $j\leq k$ and $g(\xi)=0$ if $\xi\notin p$. In this case $g$ is the limit of $\{f_\alpha\colon\alpha\in A_k\}$ and clearly $\supp_{\mathrm{0}}{(g)}\subset p$.

Notice that $\lvert p\rvert\leq m$, which implies that $g\in X_m$. Since $g$ is in the closure of $\{f_\alpha\colon\alpha<\omega_1\}\subset G$ and $G$ is closed, then $g\in G$. Thus, $X_m\cap G\neq\emptyset$ and we have proved Claim \ref{X1}.
\renewcommand{\qedsymbol}{$\triangle$}
\end{proof}

\begin{claim}\label{X2}
Let $\{G_i\colon i<\omega\}$ be a collection of closed subsets of $X_{m+1}$ such that $G_{i+1}\subset G_i$ for all $i<\omega$. If $G_i\cap X_m\neq\emptyset$ for all $i<\omega$, then $(\bigcap_{i<\omega}{G_i})\cap X_m\neq\emptyset$. 
\end{claim}
\begin{proof}[Proof of Claim \ref{X2}]
For each $i<\omega$, let $f_i\in G_i\cap X_m$. Let $$A=\bigcup\{\supp_{\mathbf{0}}{(f_i)}\colon i<\omega\},$$ which is a countable set. 

Assume for a moment that $A$ is infinite. Then we may find an enumeration $A=\{\xi_j\colon j\in\omega\}$. We recursively construct a decreasing sequence $\{N_j\colon j\in\omega\}\subset[\omega]\sp\omega$ such that $N_0=\omega$, and a sequence of ordinals $\{\beta_j\colon j\in\omega\}$ as follows. Given $j\in\omega$, $\{f_i(\xi_j)\colon i\in N_j\}$ is a countable subset of the sequentially compact ordinal $\gamma_{\xi_j}$ so we may choose $N_{j+1}\in [N_j]\sp\omega$ and $\beta_j\in\gamma_{\xi_j}$ such that $\{f_i(\xi_j)\colon i\in N_{j+1}\}\rightarrow\beta_j$. Next, construct a pseudointersection $N=\{k_j\colon j\in\omega\}\in[\omega]\sp\omega$ of the sequence $\{N_j\colon j\in\omega\}$ as usual: $k_0=0$ and $k_{j+1}=\min\{i\in N_{j+1}\colon i>k_j\}$. Then $\{f_i(\xi_j)\colon i\in N\}\rightarrow\beta_j$ for all $j<\omega$. Define $g\in\prod_{\xi<\lambda}{\gamma_\xi}$ as follows: $g(\xi_j)=\beta_j$ if $j<\omega$ and $g(\xi)=0$ if $\xi\notin A$. Then $g$ is clearly a limit point of $\{f_i\colon i<\omega\}$.

If $A$ is finite, we can certainly proceed in a analogous way (but with only finitely many steps) to construct $g\in\prod_{\xi<\lambda}{\gamma_\xi}$ that is a limit point of $\{f_i\colon i<\omega\}$.

In any of the two cases, since $G\cap X_m$ is closed in  $\prod_{\xi<\lambda}{\gamma_\xi}$ for every $m<\omega$, it follows that $g\in(\bigcap_{i<\omega}{G_i})\cap X_m$.
\renewcommand{\qedsymbol}{$\triangle$}
\end{proof}

Let $\{F_i\colon i\in\omega\}$ be a family of closed subsets of $X_{m+1}$. 

Since we are inductively assuming that $X_m$ is exponentially separable, there exists a countable set $C\subset X_m$ that is strongly dense in $\{F_i\cap X_m\colon i\in\omega\}$.

For each $s\in[\omega]\sp{<\omega}$, define $F_s=\bigcap\{F_i\colon i\in s\}$, which is a closed set. Let $A=\{s\in[\omega]\sp{<\omega}\colon F_s\neq\emptyset,\, F_s\cap X_m=\emptyset\}$. For each $s\in A$, by Claim \ref{X1}, $\mathcal{I}(F_s)$ is countable. Then $\mathcal{I}=\bigcup\{\mathcal{I}(F_s)\colon s\in A\}$ is countable. 

For each $t\in\mathcal{I}$, $S_t$ is homeomorphic to $\prod_{i\in t}{\gamma_i}$, which is exponentially separable by Lemma \ref{lemma:product_unctble_cofin_ES}. Thus, there exists a countable set $C_t\subset S_t\subset X_{m+1}$ that is strongly dense in $\{F_n\cap S_t\colon n\in \omega\}$.

Finally, we define $D=C\cup(\bigcup\{C_t\colon t\in\mathcal{I}\})$. Clearly, $D$ is countable, we claim that it is strongly dense in $\{F_i\colon i\in\omega\}$. 

Let $Z\subset\omega$ and assume that $\bigcap_{i\in Z}{F_i}\neq\emptyset$. We have to prove that $(\bigcap_{i\in Z}{F_i})\cap D\neq\emptyset$.

First, assume that $(\bigcap_{i\in Z}{F_i})\cap X_m\neq\emptyset$. By the definition of $C$, it follows that $(\bigcap_{i\in Z}{(F_i\cap X_m)})\cap C\neq\emptyset$. In particular, $(\bigcap_{i\in Z}{F_i})\cap D\neq\emptyset$.

Now, assume that $(\bigcap_{i\in Z}{F_i})\cap X_m=\emptyset$. Let $\varphi\colon\omega\to\omega$ be a non-decreasing function such that $Z=\mathsf{im}(\varphi)$ (which might not be injective when $Z$ is finite). By the contrapositive statement of Claim \ref{X2}, there exists $k<\omega$ such that $(\bigcap_{j\leq k}{F_{\varphi(j)}})\cap X_m=\emptyset$. Let $r=\{\varphi(j)\colon j\leq k\}$, then $r\in A$.

Since $r\in A$, by the definition of $\mathcal{I}(F_r)$, $F_r\subset\bigcup_{t\in\mathcal{I}(F_r)}{S_t}$. Thus, $\bigcap_{i\in Z}{F_i}\subset\bigcup_{t\in\mathcal{I}(F_r)}{S_t}$. There must exist some $u\in\mathcal{I}(F_r)$ such that $(\bigcap_{i\in Z}{F_i})\cap S_u\neq\emptyset$. By the definition of $C_u$, $(\bigcap_{i\in Z}{(F_i\cap S_u)})\cap C_u\neq\emptyset$. In particular, $(\bigcap_{i\in Z}{F_i})\cap D\neq\emptyset$.

This completes the proof that $X_{m+1}$ is exponentially separable. Thus, we have completed the recursion and it follows that $\sigma(\prod_{\xi<\lambda}{\gamma_\xi},\mathbf{0})$ is exponentially separable.
\end{proof}

\begin{thm}\label{thm:sigma-prod}
Let $\{\gamma_\xi\colon\xi<\lambda\}$ be a set of non-zero ordinals. Then $\sigma(\prod_{\xi<\lambda}{\gamma_\xi},\mathbf{0})$ is exponentially separable.
\end{thm}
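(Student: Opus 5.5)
The plan is to reduce Theorem \ref{thm:sigma-prod} to Lemma \ref{lemma:sigma-prod-c1} by embedding the $\sigma$-product as a closed subspace of a $\sigma$-product of ordinals that are all successors or of uncountable cofinality. Exponential separability is hereditary to closed subspaces: if $Z\subset X$ is closed and $\mathcal{F}$ is a countable family of closed subsets of $Z$, then $\mathcal{F}$ is also a countable family of closed subsets of $X$. A countable set $A\subset X$ strongly dense in $\mathcal{F}$ then meets every nonempty intersection of members of $\mathcal{F}$, and all such intersections lie in $Z$. Hence $A\cap Z$ works.

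For each $\xi<\lambda$ define $\delta_\xi$ as follows. If $\gamma_\xi$ is a successor or has uncountable cofinality, put $\delta_\xi=\gamma_\xi$. If $\gamma_\xi$ is a limit of countable cofinality, put $\delta_\xi=\gamma_\xi+1$; this is a successor, and $\gamma_\xi$ is open in it. That alone does not give closedness, because $\gamma_\xi$ is not closed in $\gamma_\xi+1$. I therefore use instead that a limit ordinal $\gamma$ of countable cofinality is homeomorphic to a closed subspace of $\omega_1$, or better of a successor ordinal. Fix a cofinal sequence $\alpha_0=0<\alpha_1<\cdots$ of successor-or-zero ordinals in $\gamma$. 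Then $\gamma$ is the topological sum of the compact intervals $[\alpha_n,\alpha_{n+1})$, each homeomorphic to a successor ordinal $\eta_n+1$. Place these blocks into $\omega_1$ (or into some large ordinal $\rho$ of uncountable cofinality) as consecutive blocks separated by isolated gaps. Concretely, send $[\alpha_n,\alpha_{n+1})$ onto a clopen interval of $\rho$ via an order isomorphism, with the blocks cofinal in some $\theta<\rho$ of countable cofinality. The point $\theta$ is not in the image and is the only accumulation point outside it. To repair this, choose the image to be the blocks together with everything beyond $\theta$ avoided. Simpler still: the union of the blocks is a closed subspace of $\rho\setminus\{\theta\}$ but not of $\rho$.

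Since that fix fails, I will embed $\gamma$ into the ordinal $\gamma\cdot\omega_1$ or simply into $\omega_1\times$ something. Alternatively, and more cleanly, I use the fact that $\gamma\cong\bigoplus_n(\eta_n+1)$, which is homeomorphic to the closed subspace $\bigcup_n\{n\}\times(\eta_n+1)$ of $\omega\times(\eta+1)$, where $\eta=\sup\eta_n$. Thus each bad coordinate $\gamma_\xi$ embeds as a closed subspace of the product of two good ordinals $(\omega+1)\cdot$ no: $\omega$ is itself bad. Use instead $\omega\cong$ the isolated points $\{\alpha+1:\alpha<\omega_1\}$ restricted to the first $\omega$ successors. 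This set is $\{1,2,\dots\}$, which is not closed in $\omega_1$ (its limit is $\omega$). Take instead the points $\omega\cdot n+\omega$? Those are limits. The right choice is to take the points $\omega_1\cdot0+$ something. In general a closed discrete countably infinite subset of an ordinal of uncountable cofinality or a successor cannot exist, since such spaces are countably compact. So products of good ordinals are countably compact, hence so are their $\sigma$-products, and bad coordinates cannot embed closedly.

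So the embedding plan must change. I will instead write each bad $\gamma_\xi$ as the countable disjoint union of the clopen compact blocks $B_{\xi,n}=[\alpha^\xi_n,\alpha^\xi_{n+1})$, with $B_{\xi,0}\ni 0$. For a function $h$ assigning to each $\xi$ in a finite set $s$ of bad coordinates a block index, consider the subspace of the $\sigma$-product whose coordinates in $s$ lie in the prescribed nonzero blocks and whose other bad coordinates lie in block $0$. This subspace is homeomorphic to a $\sigma$-product of good (successor) ordinals together with the original good coordinates, so it is exponentially separable by Lemma \ref{lemma:sigma-prod-c1}. Such pieces are indexed by finite partial functions from the bad coordinates into $\omega\setminus\{0\}$, which is uncountably many if $\lambda$ is uncountable. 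Hence the main obstacle, and the heart of the proof, is to redo the induction of Lemma \ref{lemma:sigma-prod-c1} with these block-decompositions, treating the block index as extra support. I would define $X_n$ to count the coordinates that differ from $0$ or lie outside block $0$, and then check that Claims \ref{X1} and \ref{X2} still hold. Those claims only used sequential compactness of the values on countably many coordinates. Once a bad coordinate is confined to a single block, that block is compact, so the $\Delta$-system and diagonal limit arguments go through with limits taken inside the block. Bounding the block index by membership in a fixed piece should recover sequential compactness coordinatewise. If instead the block indices go to infinity along a sequence, the intersection of a decreasing sequence of closed sets could escape. I expect this to be the delicate point. My first attempt there would be to split further by countable unions, using \cite[Proposition 3.2(d)]{tka-nice_subclass}, over the sets where all block indices are at most $k$, for $k\in\omega$.
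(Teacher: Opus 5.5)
Your proposal does not reach a proof. After correctly observing that the pieces which fix the block of each bad coordinate are uncountably many, you make the heart of the argument a re-run of the induction of Lemma \ref{lemma:sigma-prod-c1} with block indices treated as extra support. You never carry that induction out, and the step you flag as delicate does fail. With your $X_n$, which is just the old ``support of size at most $n$'', the analogue of Claim \ref{X2} is false already for $\sigma(\omega\times 2,\mathbf{0})=\omega\times 2$. There the sets $G_i=[i,\omega)\times 2$ are closed and decreasing, each meets $X_1$ at $(i,0)$, and yet $\bigcap_i G_i=\emptyset$. So the central step of your argument is missing.

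The missing idea is to bound the block indices uniformly rather than record them. This is exactly the splitting in your last sentence, and it is not a patch inside the induction; it is the whole proof. For $k\in\omega$, let $Y_k$ be the set of points $f$ of the $\sigma$-product with $f(\xi)<\alpha^\xi_{k+1}$ for every bad $\xi$. Since $\alpha^\xi_{k+1}$ is a successor, $Y_k=\sigma(\prod_{\xi<\lambda}\delta_{\xi,k},\mathbf{0})$, where $\delta_{\xi,k}=\alpha^\xi_{k+1}$ for bad $\xi$ and $\delta_{\xi,k}=\gamma_\xi$ otherwise. Hence $Y_k$ is exponentially separable by Lemma \ref{lemma:sigma-prod-c1}, with no modification. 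Every point has finite support, so only finitely many bad coordinates need bounding and $\bigcup_{k\in\omega}Y_k$ is the whole $\sigma$-product. The countable-union result \cite[Proposition 3.2(d)]{tka-nice_subclass} then finishes, and this is precisely the paper's proof.

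Separately, your assertion that $\sigma$-products of good ordinals are countably compact is false once infinitely many factors are nontrivial. In $\sigma(\prod_{n<\omega}2,\mathbf{0})$, the characteristic functions of the sets $\{0,\dots,n\}$ form an infinite closed discrete set. So your stated reason for abandoning the closed-embedding route is unsound, although that is not what leaves the proposal incomplete.
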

\begin{proof}
Let $\{\gamma_\xi\colon \xi<\lambda\}$ be a collection of non-zero ordinals.

Let $A$ be the set of all $\xi<\lambda$ such that $\gamma_\xi$ is either a successor or has uncountable cofinality, and let $B=\lambda\setminus A$.

For each $\xi\in B$, we may assume that $\gamma_\xi=\sup\{\beta(\xi,j)\colon j\in\omega\}$, where $\beta(\xi,j)$ is a successor and $\beta(\xi,j)<\beta(\xi,j+1)$ for all $j\in\omega$. If $\xi\in A$, define $\beta(\xi,j)=\gamma_\xi$ for all $j\in\omega$.

For each $j\in\omega$, let $X_j=\sigma(\prod_{\xi<\lambda}{\beta(\xi,j)},\mathbf{0})$. Notice that $X_j$ is a $\sigma$-product of ordinals, each of which is either a successor or has uncountable cofinality, so by Lemma \ref{lemma:sigma-prod-c1}, $X_j$ is exponentially separable. Clearly, $\sigma(\prod_{\xi<\lambda}{\gamma_\xi},\mathbf{0})=\bigcup_{j\in\omega}{X_j}$ so it is exponentially separable.
\end{proof}

Since $\sigma$-products of finitely many factors are just products, we obtain the following corollary.

\begin{coro}
 The product of any finite collection of non-zero ordinals is exponentially separable.
\end{coro}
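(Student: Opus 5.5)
The corollary follows directly from Theorem \ref{thm:sigma-prod}, by realizing a finite product of ordinals as a $\sigma$-product. Let $\gamma_0,\dots,\gamma_{n}$ be non-zero ordinals and put $\lambda=n+1$. For every $x\in\prod_{i<\lambda}\gamma_i$ we have $\lvert\supp_{\mathbf{0}}(x)\rvert\leq\lambda<\omega$, so every point of the full product already has finite support. Hence
$$\textstyle\sigma\left(\prod_{i<\lambda}\gamma_i,\mathbf{0}\right)=\prod_{i<\lambda}\gamma_i$$
as sets. Since the $\sigma$-product carries the subspace topology from the full product, the two are equal as topological spaces too.

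The hypothesis that each $\gamma_i$ is non-zero is what makes $\mathbf{0}$ a point of the product, so that the $\sigma$-product is defined. This is exactly the assumption in Theorem \ref{thm:sigma-prod}. Applying that theorem to the family $\{\gamma_i\colon i<\lambda\}$ gives that $\prod_{i<\lambda}\gamma_i$ is exponentially separable.

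The case of an empty family, where the product is a single point, is trivially exponentially separable. There is no real obstacle: all the work is in Theorem \ref{thm:sigma-prod}, and through it in Lemmas \ref{lemma:product_unctble_cofin_ES} and \ref{lemma:sigma-prod-c1}. The only point to check is the identification of the $\sigma$-product with the full product when the index set is finite.
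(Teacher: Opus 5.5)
Your proposal is correct and matches the paper's argument. The paper also derives the corollary from Theorem \ref{thm:sigma-prod}, using the same observation that a $\sigma$-product over a finite index set is just the full product.
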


\section{Hereditarily Lindelöf spaces}

We will say that a space $X$ is \emph{hereditarily functionally countable} if every subspace of $X$ is functionally countable. 
In \cite[Corollary 3.9]{tka-fc_nice_prod-2025}, Tkachuk proved that if a Tychonoff space is hereditarily Lindelöf and functionally countable, then it is necessarily hereditarily functionally countable. In fact, the following is true.

\begin{obs}
Let $X$ be a Tychonoff and hereditarily Lindelöf space. If $X$ is functionally countable then every subspace of $X$ is exponentially separable.
\end{obs}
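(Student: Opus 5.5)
The plan is to use Tkachuk's Corollary 3.9 of \cite{tka-fc_nice_prod-2025}, quoted just before the statement, which gives that every subspace of $X$ is functionally countable. Then I want to show that for a functionally countable space $Y$ which is Tychonoff and hereditarily Lindelöf, every closed set of $Y$ is a zero-set. Once that is done, Tkachuk's characterization (\cite[Theorem 3.8]{tka-nice_subclass}) finishes the proof: a countable family of closed sets is then a countable family of zero-sets, so it has a countable strongly dense subset.

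Concretely, fix $Y\subset X$. Being a subspace of $X$, the space $Y$ is Tychonoff and hereditarily Lindelöf, and by Tkachuk's result it is functionally countable. The key point is that a regular hereditarily Lindelöf space is perfectly normal. Regularity plus Lindelöfness gives normality, and applying this to all subspaces together with hereditary Lindelöfness gives that every open set is an $F_\sigma$. To see the latter, write an open $U$ as a union of open sets $V$ with $\cl{Y}{V}\subset U$, which is possible by regularity, and extract a countable subcover since $U$ is Lindelöf. So $Y$ is perfectly normal, and by Vedenisov's theorem every closed subset of $Y$ is a zero-set.

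Alternatively, I can appeal directly to the fact recorded in the introduction (\cite[Corollary 3.9]{tka-nice_subclass}) that a perfectly normal functionally countable space is exponentially separable. Applied to $Y$, this gives the conclusion immediately.

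There is no real obstacle here. The only point needing care is checking that perfect normality passes to every subspace $Y$, and not just to $X$. This is handled by verifying it directly for $Y$ from hereditary Lindelöfness and regularity of $Y$, rather than trying to inherit the zero-set representation from $X$. The functional countability of arbitrary, possibly non-closed, subspaces is exactly what Tkachuk's cited corollary supplies, so that input is taken as given.
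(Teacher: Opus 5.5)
Your proposal is correct and matches the paper's proof: every subspace $Y$ is perfectly normal (the paper notes $X$ is perfectly normal and this is hereditary, while you verify it directly for $Y$), and $Y$ is functionally countable by \cite[Corollary 3.9]{tka-fc_nice_prod-2025}, so \cite[Corollary 3.9]{tka-nice_subclass} applies. Your first route only unpacks that corollary via Vedenisov's theorem and \cite[Theorem 3.8]{tka-nice_subclass}.
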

\begin{proof}
 Let $Y\subset X$. Since $X$ is regular and hereditarily Lindelöf, $X$ is perfectly normal. In particular, $Y$ is perfectly normal. By \cite[Corollary 3.9]{tka-fc_nice_prod-2025}, $Y$ is functionally countable. In \cite[Corollary 3.9]{tka-nice_subclass} it is proved that perfectly normal, functionally countable spaces are exponentially separable. Thus, $Y$ is exponentially separable.
\end{proof}

Thus, in \cite[Question 4.3]{tka-fc_nice_prod-2025} Tkachuk asked whether every Tychonoff and hereditarily functionally countable space is hereditarily Lindelöf. We will answer this question. First, we prove that an Ostaszewski space is an example of a hereditarily functionally countable space that is not Lindelöf. We will use the following definition of Ostaszewski space, taken from \cite{eisworth-roitman-1999}.

A Hausdorff space $X$ is \emph{sub-Ostaszewski} if it is uncountable and every closed set of $X$ is either countable or cocountable. An \emph{Ostaszewski space} is defined to be a regular, countably compact and non-compact sub-Ostaszewski space. Ostaszewski's classic space, constructed in \cite{ostaszewski} from $\diamondsuit$, is sub-Ostaszewski (see also \cite[5.3]{roitman-handbook}).

\begin{thm}\label{thm:subOst-her_es}
 If $X$ is a sub-Ostaszewski space, then any $Y\subset X$ is exponentially separable.
\end{thm}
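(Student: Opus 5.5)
The plan is to work directly with the definition of exponential separability, using only the countable/cocountable dichotomy for closed sets. Fix $Y\subset X$ and a countable family $\{F_n\colon n\in\omega\}$ of closed subsets of $Y$. For each $n$, let $\widetilde F_n=\cl{X}{F_n}$, which is closed in $X$ and hence either countable or cocountable in $X$. Note that $F_n=\widetilde F_n\cap Y$.

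We split $\omega$ into $N_c=\{n\colon \widetilde F_n \text{ is countable}\}$ and $N_u=\omega\setminus N_c$. The candidate strongly dense set is
$$\textstyle D=\left(Y\cap\bigcup_{n\in N_c}\widetilde F_n\right)\cup D',$$
where $D'$ handles intersections that avoid all countable members. The first part is countable. Now let $Z\subset\omega$ with $\bigcap_{i\in Z}F_i\neq\emptyset$.

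\emph{Case $Z\cap N_c\neq\emptyset$.} Then $\bigcap_{i\in Z}F_i\subset F_n\subset D$ for some $n\in N_c$, so the intersection meets $D$ trivially.

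\emph{Case $Z\subset N_u$.} Each $\widetilde F_i$ with $i\in Z$ is cocountable in $X$, so $\bigcap_{i\in Z}F_i$ equals $Y$ minus a countable set. It is nonempty by hypothesis, but we still need a point of $D'$ inside it. If $Y$ is countable, we simply take $D'=Y$. If $Y$ is uncountable, we choose $D'\subset Y$ as an uncountable... which is impossible, since $D'$ must be countable.

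This is the main obstacle. The fix is to make $D'$ outside of a countable bad set. Let $E=\bigcup_{n\in N_u}(X\setminus\widetilde F_n)$, which is countable as a countable union of countable sets. If $Y\setminus E\neq\emptyset$, pick one point $y_0\in Y\setminus E$. Then $y_0\in F_n$ for every $n\in N_u$, so $y_0$ lies in every intersection indexed by $Z\subset N_u$. If $Y\subset E$, then $Y$ is countable and we take $D'=Y$. In either case we set $D'=\{y_0\}$ or $D'=Y$, and this settles the case.

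Hence $D$ is countable and strongly dense in $\{F_n\colon n\in\omega\}$, so $Y$ is exponentially separable. The argument never uses regularity or countable compactness, only that closed sets of $X$ are countable or cocountable. The one point to check carefully is that passing to closures in $X$ preserves traces: $\cl{X}{F_n}\cap Y=F_n$ because $F_n$ is closed in $Y$.
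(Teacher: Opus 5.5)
Your proof is correct and is essentially the paper's argument: the members whose closure in $X$ is countable are absorbed by a countable set, and a single point of $Y$ lies in all the co-countable ones at once. The only difference is cosmetic. The paper first identifies $X$ with $\omega_1$ (citing Eisworth--Roitman) and uses $(\alpha+1)\cap Y$ for a large enough $\alpha\in Y$, whereas your choice of $y_0\in Y\setminus E$ avoids that reduction. You should also delete the false start about an uncountable $D'$.
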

\begin{proof}
 According to \cite[Proposition 2.10]{eisworth-roitman-1999} we may assume that the underlying set of $X$ is $\omega_1$.
 
 Let $Y\subset\omega_1$ and let $\mathcal{F}=\{F_n\colon n\in\omega\}$ be a collection of closed subsets of $Y$. Define $A=\{n\in\omega\colon F_n\textrm{ is uncountable}\}$ and $B=\omega\setminus A$. Notice that if $A=\emptyset$, then $\bigcup_{n\in\omega}F_n$ is a countable set that is strongly dense in $\mathcal{F}$. Thus, we will now assume that $A\neq\emptyset$.
 
 Let $n\in A$. Since $F_n$ is uncountable, there exists $\alpha_n\in\omega_1$ such that $\omega_1\setminus\alpha_n\subset\cl{X}{F_n}$. But $F_n$ is closed in $Y$, so  $Y\setminus\alpha_n\subset Y\cap \cl{X}{F_n}=F_n$. Let $\alpha\in\omega_1$ be such that $\alpha\geq\sup\{\alpha_n\colon n\in A\}$, $\alpha\in Y$ and $\bigcup_{n\in B}F_n\subset\alpha$. Then $(\alpha+1)\cap Y$ is a countable subset of $Y$ that is strongly dense in $\mathcal{F}$.
\end{proof}

Since regular sub-Ostaszewski spaces are $S$-spaces (this follows from \cite[Proposition 2.4]{eisworth-roitman-1999}, \cite[Proposition 2.6]{eisworth-roitman-1999} and \cite[Proposition 2.10]{eisworth-roitman-1999}), regular sub-Ostaszewski spaces do not exist in all models of \ZFC. In fact, we have the following corollary.

\begin{coro}
 It is independent of \ZFC{} whether every regular hereditarily functionally countable space is hereditarily Lindelöf.
\end{coro}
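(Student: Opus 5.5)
Independence needs two halves: a model where the statement fails, and a model where it holds.

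For failure, I would work under $\diamondsuit$ (so consistent with \ZFC{}) and use Ostaszewski's classic space $X$ from \cite{ostaszewski}. It is regular, locally compact (hence Tychonoff) and sub-Ostaszewski. By Theorem \ref{thm:subOst-her_es} every subspace $Y\subset X$ is exponentially separable. By \cite[Corollary 3.7]{tka-nice_subclass} every subspace is therefore functionally countable, so $X$ is hereditarily functionally countable. But $X$ is countably compact and not compact, so it is not Lindelöf, since a Lindelöf countably compact space is compact. In particular it is not hereditarily Lindelöf. One point needs a little care: Tkachuk's question is about Tychonoff spaces. A regular sub-Ostaszewski space of size $\omega_1$ with countable closures of countable sets is zero-dimensional in the standard construction, so Tychonoff; otherwise I would simply note that Ostaszewski's space is locally compact Hausdorff.

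For the positive direction, I would work under \PFA{}, where there are no regular $S$-spaces (Todorcevic). Let $X$ be regular and hereditarily functionally countable, and suppose it is not hereditarily Lindelöf. Then $X$ contains a right-separated subspace $Y=\{y_\alpha\colon\alpha<\omega_1\}$. Under \PFA{} with no $S$-spaces, $Y$ cannot be hereditarily separable, so it contains an uncountable left-separated subspace, and after refining we get an uncountable subspace $Z$ that is discrete-like. A cleaner route is the following. Under \PFA{} every regular space of countable spread is hereditarily Lindelöf. So if $X$ is not hereditarily Lindelöf, then $X$ has an uncountable discrete subspace $D$. Take $D$ of size $\omega_1$. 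Then $D$ is a discrete uncountable subspace, and a discrete space of size $\omega_1$ carries a continuous injection into $\mathbb{R}$. Hence $D$ is not functionally countable, a contradiction.

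The main obstacle is exactly this step: making sure ``regular, not hereditarily Lindelöf'' yields an uncountable discrete subspace under \PFA{}. I would cite Todorcevic's theorem that \PFA{} implies there are no $S$-spaces. Then I would use the classical fact that a regular space which is not hereditarily Lindelöf contains an uncountable right-separated subspace. With no $S$-spaces, that subspace is not hereditarily separable, so it contains an uncountable left-separated subspace $W$ of type $\omega_1$. Any subspace of $W$ is again not hereditarily Lindelöf, and Todorcevic's result gives $hs=hl$-type collapse in the regular case, so that $s(W)=hl(W)$. It follows that $W$ contains an uncountable discrete set. Combining the two models gives the independence.
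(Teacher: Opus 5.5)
Your proposal is correct and follows the paper's proof. Under $\diamondsuit$, Ostaszewski's space is handled by Theorem~\ref{thm:subOst-her_es} exactly as you do it. Under \PFA{}, the paper likewise passes from a failure of hereditary Lindel\"ofness to an uncountable right-separated subspace, then uses the absence of $S$-spaces to reach an uncountable discrete (hence not functionally countable) subspace.

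The one loose spot is how you justify the final step. The uncountable discrete set inside the left-separated $W\subset Y$ does not come from an ``$s=hL$'' form of Todorcevic's theorem. It comes from an elementary fact, for which the paper cites \cite[Proposition 3.3]{roitman-handbook}: a set that is left-separated and right-separated in type $\omega_1$ has an uncountable subset on which the two well-orders agree (take a club of levels where their initial segments coincide), and that subset is discrete. Also, it is every \emph{uncountable} subspace of $W$ that fails to be hereditarily Lindel\"of, not every subspace.
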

\begin{proof}
 It is well-known that Ostaszewski spaces exist under $\diamondsuit$, and these are regular, hereditarily functionally countable by Theorem \ref{thm:subOst-her_es}, but not Lindelöf.
 
 Now, work in a model of \PFA{} and assume that $X$ is not hereditarily Lindelöf. By \cite[Theorem 3.1]{roitman-handbook}, $X$ contains an uncountable right-separated subspace $Y$. Since \PFA{} implies there are no $S$-spaces by Todorčević's celebrated theorem (see \cite[Theorem 8.9]{todorcevic-partition_problems}), from \cite[Proposition 3.3]{roitman-handbook} it follows that $Y$ contains an uncountable discrete subspace. But then $X$ is contains a subspace that is not functionally countable. Thus, \PFA{} implies that all hereditarily functionally countable spaces are hereditarily Lindelöf.
\end{proof}

\section*{Acknowledgements}

The research of the first-named author was supported by the 2025 CIMPA-ICTP ``Research in Pairs'' fellowship program and the grant CBF2023-2024-2028 from the SECIHTI.

\end{document}